\newcommand{\bcen}{\begin{center}}     \newcommand{\ecen}{\end{center}}
\newcommand{\bay}{\begin{array}}      \newcommand{\eay}{\end{array}}
\newcommand{\beq}{\begin{eqnarray*}}      \newcommand{\eeq}{\end{eqnarray*}}
\def\fd{\mathrm{fin.dim}}
\def\Hom{\mathrm{Hom}}
\def\Ker{\mathrm{Ker}}
\def\Im{\mathrm{Im}}
\def\Ext{\mathrm{Ext}}
\def\Tor{\mathrm{Tor}}
\def\mod{\mathrm{mod}}
\def\Mod{\mathrm{Mod}}
\def\id{\mathrm{id}}
\def\Id{\mathrm{Id}}
\def\pd{\mathrm{pd}}
\def\fd{\mathrm{fd}}
\def\proj{\mathrm{proj}}
\def\Gproj{\mathrm{Gproj}}
\def\GProj{\mathrm{GProj}}
\def\Proj{\mathrm{Proj}}
\def\Coker{\mathrm{Coker}}
\def\Add{\mathrm{Add}}
\begin{document}

\newtheorem{theorem}{Theorem}[section]
\newtheorem{proposition}[theorem]{Proposition}
\newtheorem{lemma}[theorem]{Lemma}
\newtheorem{corollary}[theorem]{Corollary}
\newtheorem{remark}[theorem]{Remark}
\newtheorem{example}[theorem]{Example}
\newtheorem{definition}[theorem]{Definition}
\newtheorem{question}[theorem]{Question}
\numberwithin{equation}{section}

\title{\large\bf
Gorenstein projective
objects over cleft extensions}

\author{\large Yongyun Qin}

\date{\footnotesize School of Mathematics, Yunnan Key Laboratory of Modern Analytical Mathematics and Applications,
Yunnan Normal University, Kunming, Yunnan 650500, China.
\\E-mail: qinyongyun2006@126.com
	}

\maketitle

\begin{abstract}
In this paper we introduce compatible
cleft extensions of abelian categories, and we prove that if
$(\mathcal{B},\mathcal{A},
e,i,l)$ is a compatible cleft extension, then both the functor
$l$ and the left adjoint of $i$ preserve Gorenstein projective objects.
Moreover, we give some necessary conditions for an object of $\mathcal{A}$ to be
Gorenstein projective, and we show that these necessary conditions are also sufficient in some special case.
As applications, we unify some known results on the description of Gorenstein projective modules
over triangular matrix rings, Morita context rings with zero homomorphisms and $\theta$-extensions.
\end{abstract}

\medskip

{\footnotesize {\bf Mathematics Subject Classification (2020)}:
16D90; 16E30; 16E65; 16G50; 18G25.}

\medskip

{\footnotesize {\bf Keywords}: Cleft extensions;
Gorenstein projective
objects; Morita context
rings; $\theta$-extensions. }

\bigskip

\section{\large Introduction}

\indent\indent
The main idea of Gorenstein homological algebra is to replace projective modules by Gorenstein projective
modules. This kind of modules traces back to Auslander and Bridger \cite{AB69}, where
they study modules of G-dimension zero over noetherian rings.
After two decades, Enochs and Jenda generalized these modules to Gorenstein projective modules
over an arbitrary ring \cite{EJ95}. A fundamental problem in Gorenstein homological
algebra is determining all the Gorenstein-projective
modules for a given ring. However, this task is difficult
and was only investigated
for some special kinds of rings, such as triangular matrix rings \cite{LZ20, XZ12, Z13}, Morita context rings
with zero homomorphisms \cite{Ase22, GP17, GX24},
trivial ring extensions
\cite{M25} and tensor rings \cite{CL20, DLST25}.
A natural generalization of these rings is the class of $\theta$-extension rings
introduced by Marmaridis \cite{Mar93}.
On the other hand, there are many works on how the property of being
Gorenstein-projective can be preserved under certain functors, see \cite{CR22, LHZ22,
Lu17, Lu19, QS24}

A {\it cleft extension} of an abelian category $\mathcal{B}$
is an abelian category $\mathcal{A}$
together with functors $$\xymatrix@!=4pc{ \mathcal{B} \ar[r]|{i} & \mathcal{A}
			 \ar[r]|{e} & \mathcal{B}
			\ar@/_1pc/[l]|{l} }  $$
such that the functor $e$ is faithful exact and admits a left adjoint $l$, and there is a natural isomorphism
$ei \cong \Id_{\mathcal{B}}$. We denote this cleft extension by $(\mathcal{B},\mathcal{A},
e,i,l)$, which give rise to a functor $q$ such that $(q,i)$ is an adjoint pair. Also, there are
two endofunctors $F: \mathcal{B}\rightarrow \mathcal{B}$ and $G: \mathcal{A}\rightarrow \mathcal{A}$
with certain properties, see Section~\ref{Section-definitions and conventions}.
The concept of cleft extension was
introduced by Beligiannis \cite{Bel00} as a generalization of trivial extension of abelian categories
defined by Fossum-Griffith-Reiten \cite{FGR75}.
Recently, a theorem of Kostas-Psaroudakis stating that the cleft extensions of module categories occur
precisely as $\theta$-extensions of rings \cite{KP25}. Therefore, the cleft extension of abelian category
provide a common framework for the study of trivial extension rings
and tensor rings. Moreover, the framework of cleft extension was used to reduce some homological properties
of rings, such as the finitistic dimension \cite{EGPS22, GPS18}, cotorsion pairs \cite{H25},
the property
of injective generation \cite{KP25}, the Igusa-Todorov distances and
extension dimensions \cite{MZL25}, the Gorenstein weak global (flat-cotorsion) dimension
and relative singularity categories \cite{LMY25},
the Gorenstein projective modules, singularity categories and Gorensteinness \cite{EPS22, Kos24}.
In particular, Kostas proved that both $l$ and $q$ preserve
Gorenstein projective objects if $F$ is perfect and nilpotent \cite{Kos24}. The main objective
of this paper is to extend the result of Kostas to
a more general setting.
To achieve this goal, we introduce the conception of compatible cleft extensions (see Definition~\ref{def-comp-exten}),
which is motivated by the
connection with the compatible bimodules, see Proposition~\ref{tri-matri-comp}.
Our first result is the following, which is listed in Theorem~\ref{thm-comp-exten}.

\medskip

{\bf Theorem I.} {\it
Let $(\mathcal{B}, \mathcal{A}, e, i, l)$ be a cleft extension which is compatible.
Then both $l$ and $q$ preserve Gorenstein projective objects.

}

\medskip

We prove that if $F$ is perfect and nilpotent in the sense of Kostas \cite{Kos24} then the
cleft extension is compatible, and the converse may not be true,
see Proposition~\ref{proposition-perfect} and Remark~\ref{remark-comp-exten}. Therefore,
in order to restrict $q$ and $l$ to the subcategories of Gorenstein projective objects, our compatibility conditions on cleft extension
are more general than that of Kostas \cite{Kos24}.
Moreover, applying Theorem I, we give some necessary conditions for an object of $\mathcal{A}$ to be
Gorenstein projective, and we show that these necessary conditions are also sufficient in some special case.

\medskip

{\bf Theorem II.} (See Theorem~\ref{theorem-Gpro-condition})
{\it Let $(\mathcal{B}, \mathcal{A}, e, i, l)$ be a cleft extension which is compatible. Take $X\in \mathcal{A}$
and consider
the following conditions:

{\rm (1)} $X \in \GProj\mathcal{A}$;

{\rm (2)} $q(X) \in \GProj\mathcal{B}$ and $q(u _X)$ is a monomorphism;

{\rm (3)} $q(X) \in \GProj\mathcal{B}$ and the sequence $\xymatrix{F^2e(X)
\ar[r]^{\alpha _X} & Fe(X) \ar[r]^{\beta _X} & e(X) } $ is exact.

Then we have {\rm (1)} $\Rightarrow$ {\rm (2)} $\Leftrightarrow$ {\rm (3)}.
Moreover, all three conditions are equivalent if the corresponding natural transformation $\eta : F^2\rightarrow F$ is zero.
}

As an application of Theorem II, we unify some known results on the description of (infinitely generated) Gorenstein projective modules
over triangular matrix rings, Morita context rings with zero homomorphisms and $\theta$-extensions, see
Corollary~\ref{cor-theta-extension}, Corollary~\ref{cor-Morita-ring} and Corollary~\ref{cor-tri-alg}.

The paper is organized as follows. In section 2, we recall some relevant
definitions and conventions.
In section 3 we prove Theorem I and Theorem II.
In section 4, we give some applications
in triangular matrix rings, Morita context rings, trivial ring extensions
and $\theta$-extensions.

 \section{\large Definitions and conventions}\label{Section-definitions and conventions}

\indent\indent In this section, we give some notions and some preliminary results.

Let $\mathcal{A}$ be an abelian category and the subcategories discussed in this paper are full
and closed under isomorphisms. We use $\Proj \mathcal{A}$ to denote the subcategories of $\mathcal{A}$
consisting of all projective objects. As usual, we write the following (co)complex of $\mathcal{A}$
$$\xymatrix{\cdots \ar[r]& X^{-1}
\ar[r]^{d^{-1}} & X^0 \ar[r]^{d^{0}} & X^1
\ar[r] & \cdots} $$
as $(X^\bullet, d^\bullet)$,
where $d^i:X^i\rightarrow X^{i+1}$ is the $i$-th differential. Sometimes, for simplicity, we shall write
it as $X^\bullet$ without mentioning the morphisms $d^i$.

Recall that a {\it complete $\mathcal{A}$-projective resolution} is an exact sequence
$$\xymatrix{(P^\bullet,d^\bullet) = \cdots \ar[r]& P^{-1}
\ar[r]^{d^{-1}} & P^0 \ar[r]^{d^{0}} & P^1
\ar[r] & \cdots} $$
in $\mathcal{A}$ with all $P^i\in \Proj \mathcal{A}$, such that $\Hom _A(P^\bullet, Q)$ is exact for
every $Q \in \Proj \mathcal{A}$.

\begin{definition}{\rm(\cite{EJ00})
 An object $G \in \mathcal{A}$ is called {\it Gorenstein projective} if there exists a complete
$\mathcal{A}$-projective resolution $(P^\bullet, d^\bullet)$ such that $G = \Im(d^{0})$.}

\end{definition}

Denote by $\GProj \mathcal{A}$
the subcategory of $\mathcal{A}$ consisting of all Gorenstein projective objects.
It is well known that $\GProj \mathcal{A}$
is a Frobenius category such that each object in $\Proj \mathcal{A}$ is projective-injective and
its stable category $\underline{\GProj \mathcal{A}}$ is a triangulated category.

Now let's recall and review some results about cleft extensions
of abelian categories from \cite{Bel00, GPS18}.

\begin{definition}{\rm (\cite{Bel00})
A {\it cleft extension} of an abelian category $\mathcal{B}$
is an abelian category $\mathcal{A}$
together with functors $$\xymatrix@!=4pc{ \mathcal{B} \ar[r]|{i} & \mathcal{A}
			 \ar[r]|{e} & \mathcal{B}
			\ar@/_1pc/[l]|{l} }  $$
such that the functor $e$ is faithful exact and admits a left adjoint $l$, and there is a natural isomorphism
$ei \cong \Id_{\mathcal{B}}$.
}
\end{definition}

From now on we will denote a cleft extension by $(\mathcal{B},\mathcal{A},
e,i,l)$, and it
give rise to additional properties. For instance, it follows that the functor $i$ is fully
faithful and exact (see \cite[Lemma 2.2]{Bel00}, \cite[Lemma 2.2(ii)]{GPS18}). Moreover, there is a
functor $q: \mathcal{A} \rightarrow \mathcal{B}$, which is left adjoint of $i$ (see \cite[Proposition 2.3]{Bel00}
and \cite[Lemma 2.2(iv)]{GPS18}). Then, $(ql, ei)$ is an adjoint pair and since $ei \cong \Id_{\mathcal{B}}$,
it follows that $ql \cong \Id_{\mathcal{B}}$. Moreover, there are endofunctors
$F: \mathcal{B}\rightarrow \mathcal{B}$ and $G: \mathcal{A}\rightarrow \mathcal{A}$ that
appear in the following short exact sequences
\begin{equation}\label{G}
0\rightarrow G\rightarrow le\rightarrow \Id_{\mathcal{A}}\rightarrow 0
\end{equation}
and
\begin{equation}\label{F}
0\rightarrow F\rightarrow el\rightarrow \Id_{\mathcal{B}}\rightarrow 0,
\end{equation}
where the sequence {\rm (\ref{F})} splits (see \cite[Section 2]{GPS18}).
The functor $F$ is defined as $eGi$ and from the above
one can prove that $F^ne \cong eG^n$, thus in particular $F$ is nilpotent if and only if $G$ is
nilpotent (see \cite[Lemma 2.4]{GPS18}). It also follows that for every
$A \in \mathcal{A}$ and $n \geq 1$, there is a short exact sequence
\begin{equation}\label{Gn}
0\rightarrow G^n(A)\rightarrow lF^{n-1}e(A)\rightarrow G^{n-1}(A)\rightarrow 0,
\end{equation}
see \cite{GPS18} or \cite[Lemma 2.2]{MZL25}.

Throughout this paper, all rings are nonzero associative rings with identity and all
modules are unitary. For such a ring $A$, we denote by $\Mod A$ the category of left $A$-modules, and by $\mod A$,
$\Proj A$ and $\proj A$ its full subcategories consisting of all finitely generated modules,
projective modules and finitely generated projective modules, respectively.
Similarly, we have the notations $\GProj A$ (resp. $\Gproj A$) for the full subcategories of all
the (resp. finitely generated) Gorenstein projective modules.
For any $X\in \Mod A$, we write $\Add X$ for the class of $A$-modules isomorphic to direct summands of direct sums
of copies of $X$, and we denote by $\pd(X), \id(X)$ and $\fd(X)$ the projective,
injective and flat dimensions of $X$, respectively.

\section{Gorenstein projective objects over cleft extensions}
\indent\indent
Let $(\mathcal{B},\mathcal{A},
e,i,l)$ be a cleft extension of abelian categories. Then
both $l$ and $q$ preserve projective objects since their
right adjoint $i$ and $e$ are exact.
In this section, we will
observe when the functors $l$ and $q$ preserve
Gorenstein projective objects, and we will give some necessary conditions for an object of $\mathcal{A}$ to be
Gorenstein projective. Recall there are two endofunctors
$F: \mathcal{B}\rightarrow \mathcal{B}$ and $G: \mathcal{A}\rightarrow \mathcal{A}$
associated with the cleft extension $(\mathcal{B},\mathcal{A},
e,i,l)$.

\begin{proposition}\label{proposition-l-preserve-Gproj}
Let $(\mathcal{B},\mathcal{A},
e,i,l)$ be a cleft extension of abelian categories. Assume that for any complete $\mathcal{B}$-projective resolution $P^\bullet$
and any $P'\in \Proj \mathcal{B}$,
both the complexes $F(P^\bullet)$ and $\Hom _{\mathcal{B}}(P^\bullet , F(P')) $ are acyclic. Then
the functor $l$ preserves
Gorenstein projective objects.
\end{proposition}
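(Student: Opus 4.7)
The plan is to take a complete $\mathcal{B}$-projective resolution $P^\bullet$ of a given $B\in\GProj\mathcal{B}$ with $B=\Im(d^0)$, and verify that $l(P^\bullet)$ is a complete $\mathcal{A}$-projective resolution with $l(B)=\Im l(d^0)$. This will directly give $l(B)\in\GProj\mathcal{A}$. Three things need to be checked: (i) each $l(P^i)$ is projective in $\mathcal{A}$; (ii) the complex $l(P^\bullet)$ is acyclic; and (iii) $\Hom_{\mathcal{A}}(l(P^\bullet),Q)$ is acyclic for every $Q\in\Proj\mathcal{A}$. Conditions (i) and (ii) are essentially immediate. Since $l$ is left adjoint to the exact functor $e$, it preserves projectives, giving (i). For (ii), the faithful exactness of $e$ reduces the question to showing $el(P^\bullet)$ is acyclic; but the short exact sequence~(\ref{F}) splits, so $el(P^\bullet)\cong P^\bullet\oplus F(P^\bullet)$, and both summands are acyclic (the first because $P^\bullet$ is a complete projective resolution, the second by the first hypothesis of the proposition).

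The real content is step (iii). Here I would first show that every $Q\in\Proj\mathcal{A}$ is a direct summand of $l(P')$ for some $P'\in\Proj\mathcal{B}$. Pick an epimorphism $P'\twoheadrightarrow e(Q)$ with $P'\in\Proj\mathcal{B}$; the morphism $l(P')\to Q$ adjoint to it becomes, after applying $e$, a morphism $el(P')\cong P'\oplus F(P')\to e(Q)$ whose restriction to the $P'$-summand is the original epimorphism. Thus $e$ of this adjoint is epi, and since $e$ is faithful exact it reflects epimorphisms, so $l(P')\to Q$ is itself epi; as $Q$ is projective, the sequence splits and $Q$ is a summand of $l(P')$. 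Because a direct summand of an acyclic complex of abelian groups is acyclic, it now suffices to verify that $\Hom_{\mathcal{A}}(l(P^\bullet),l(P'))$ is acyclic for every $P'\in\Proj\mathcal{B}$. The adjunction $l\dashv e$ converts this complex into $\Hom_{\mathcal{B}}(P^\bullet,el(P'))$, which by the splitting of~(\ref{F}) decomposes as $\Hom_{\mathcal{B}}(P^\bullet,P')\oplus\Hom_{\mathcal{B}}(P^\bullet,F(P'))$. The first summand is acyclic because $P^\bullet$ is a complete projective resolution against $P'\in\Proj\mathcal{B}$, and the second is precisely the second hypothesis of the proposition.

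Finally, to identify the cycle object, use that $l$ is right exact as a left adjoint, so $l(B)=l(\Coker d^{-1})=\Coker l(d^{-1})$; combined with the acyclicity of $l(P^\bullet)$ established in (ii), this cokernel equals $\Im l(d^0)$. Hence $l(B)$ appears as a cycle of the complete $\mathcal{A}$-projective resolution $l(P^\bullet)$ and therefore lies in $\GProj\mathcal{A}$.

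The main obstacle is step (iii), and within it the reduction to projectives of the form $l(P')$. The naive route of rewriting $\Hom_{\mathcal{A}}(l(P^\bullet),Q)=\Hom_{\mathcal{B}}(P^\bullet,e(Q))$ for an arbitrary $Q\in\Proj\mathcal{A}$ fails because $e(Q)$ is typically not projective in $\mathcal{B}$; the description $\Proj\mathcal{A}=\Add\{l(P'):P'\in\Proj\mathcal{B}\}$ is the technical lever that turns the hypotheses on $F$ into something usable. Implicit throughout is the standing assumption that $\mathcal{B}$ has enough projectives, which is automatic in the module-theoretic applications targeted in later sections.
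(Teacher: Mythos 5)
Your proof is correct and follows essentially the same route as the paper's: acyclicity of $l(P^\bullet)$ via the faithful exactness of $e$ and the sequence $0\to F\to el\to \Id\to 0$, reduction of an arbitrary $Q\in\Proj\mathcal{A}$ to a direct summand of some $l(P')$, and then the adjunction together with the two hypotheses on $F$. The only differences are cosmetic: you prove the fact that every projective of $\mathcal{A}$ is a summand of some $l(P')$ (which the paper cites from Kostas) and you use the splitting of the sequence~(\ref{F}) to get a direct-sum decomposition where the paper uses the associated short exact sequence of Hom-complexes.
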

\begin{proof}
Let $X \in \GProj\mathcal{B}$ and let $(P^\bullet,d^\bullet)$ be a complete $\mathcal{B}$-projective resolution
with $X=\Im (d^{0})$. Then $l(P^\bullet)$ is a complex with all terms in $\Proj\mathcal{A}$.
On the other hand, it follows from (\ref{F}) that there is a short exact sequence of complexes
$$0\rightarrow F(P^\bullet)\rightarrow el(P^\bullet)\rightarrow P^\bullet \rightarrow 0.$$
Since $F(P^\bullet)$ and $P^\bullet$ are exact, we infer that
the complex $el(P^\bullet)$ is acyclic.
Further, it follows from the exactness of $e$ that $0=H^i(el(P^\bullet))\cong e(H^i(l(P^\bullet)))$ for any $i\in \mathbb{Z}$,
and thus $H^i(l(P^\bullet))=0$ since $e$ is faithful. Therefore, the complex $l(P^\bullet)$
is exact, and to show $l(X)\in \GProj\mathcal{A}$, it suffices to prove that
$\Hom _{\mathcal{A}}(l(P^\bullet) , Q)$ is exact for
every $Q \in \Proj \mathcal{A}$. Indeed, we have an adjoint isomorphism $\Hom _{\mathcal{A}}(l(P^\bullet), Q)\cong
\Hom _{\mathcal{B}}(P^\bullet , e(Q))$, and by
\cite[Lemma 2.3]{Kos24}, we get that $Q$ is a direct summand of $l(P')$ for some $P' \in \Proj \mathcal{B}$.
So it suffices to show that $\Hom _{\mathcal{B}}(P^\bullet , el(P'))$ is exact.
Applying $\Hom _{\mathcal{B}}(P^\bullet , -)$ to the exact sequence
$$0\rightarrow F(P')\rightarrow el(P')\rightarrow P' \rightarrow 0,$$
we get a short exact
sequence of complexes $$0\rightarrow \Hom _{\mathcal{B}}(P^\bullet , F(P'))\rightarrow
\Hom _{\mathcal{B}}(P^\bullet , el(P')) \rightarrow \Hom _{\mathcal{B}}(P^\bullet , P') \rightarrow 0.$$
Since $P^\bullet$ is a complete $\mathcal{B}$-projective resolution, we infer that $\Hom _{\mathcal{B}}(P^\bullet , P')$
is exact, and by assumption, we have that $\Hom _{\mathcal{B}}(P^\bullet , F(P'))$ is acyclic.
As a result, we obtain that $\Hom _{\mathcal{B}}(P^\bullet , el(P'))$ is also acyclic.
\end{proof}

\begin{proposition}\label{proposition-q-preserve-Gproj}
Let $(\mathcal{B},\mathcal{A},
e,i,l)$ be a cleft extension of abelian categories.

{\rm (i)} Assume that for any complete $\mathcal{A}$-projective resolution $Q^\bullet$
and any $P\in \Proj \mathcal{B}$,
both the complexes $q(Q^\bullet)$ and $\Hom _{\mathcal{A}}(Q^\bullet , i(P)) $ are acyclic.
Then the functor $q$ preserves
Gorenstein projective objects;

{\rm (ii)} If $F$ is nilpotent, $\mathbb{L}_iF^j(F(P))=0$ for any $i,j>0$ and $P\in \Proj \mathcal{B}$,
and for any complete $\mathcal{A}$-projective resolution $Q^\bullet$,
any $P\in \Proj \mathcal{B}$ and any $j>0$,
both the complexes $F^j(eQ^\bullet)$ and $\Hom _{\mathcal{A}}(Q^\bullet , lF^j(P)) $ are acyclic. Then the assumptions
in {\rm (1)} are satisfied and then the functor $q$ preserves
Gorenstein projective objects.
\end{proposition}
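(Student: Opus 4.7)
Part (i) is direct: if $X \in \GProj\mathcal{A}$ has complete projective resolution $Q^\bullet$, then $q(Q^\bullet)$ consists of projectives (as $q$ is left adjoint to the exact functor $i$), is acyclic by the first hypothesis, and by adjunction $\Hom_{\mathcal{B}}(q(Q^\bullet), P) \cong \Hom_{\mathcal{A}}(Q^\bullet, i(P))$ for $P \in \Proj\mathcal{B}$, which is acyclic by the second hypothesis; hence $q(X) \in \GProj\mathcal{B}$.

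For (ii), the plan is to derive the two hypotheses of (i) from the ones given. The easier one, acyclicity of $\Hom_{\mathcal{A}}(Q^\bullet, i(P))$, follows by iterating (\ref{Gn}) applied to $i(P)$ (using $e(i(P)) \cong P$ and the nilpotency $F^N = 0$) to obtain a finite exact resolution
\begin{equation*}
0 \to lF^{N-1}(P) \to \cdots \to lF(P) \to l(P) \to i(P) \to 0.
\end{equation*}
For each $j \geq 0$, $\Hom_{\mathcal{A}}(Q^\bullet, lF^j(P))$ is acyclic (by hypothesis when $j > 0$; because $l(P)$ is projective when $j = 0$). Splitting the resolution into short exact sequences and using that each $Q^p$ is projective (so $\Hom_{\mathcal{A}}(Q^p, -)$ is exact), a downward induction via the long exact cohomology sequence yields the acyclicity.

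The core of the argument is the acyclicity of $q(Q^\bullet)$. Apply (\ref{Gn}) to each $Q^p$ to get, for every $n \geq 0$, a short exact sequence of complexes $0 \to G^{n+1}(Q^\bullet) \to lF^n e(Q^\bullet) \to G^n(Q^\bullet) \to 0$; after applying $q$, the middle term becomes $F^n e(Q^\bullet)$ (using $ql \cong \Id_{\mathcal{B}}$), which is acyclic by hypothesis for $n > 0$ and by exactness of $e$ for $n = 0$. The goal is a downward induction from $G^N(Q^\bullet) = 0$ to $G^0(Q^\bullet) = Q^\bullet$. The obstacle is that $q$ is only right exact, so the induction requires the non-trivial vanishing $\mathbb{L}_1 q(G^n(Q^p)) = 0$ for every $p$ and $n \geq 0$. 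This I would establish in three linked steps: first, a Grothendieck composition spectral sequence for $F^k = F \circ F^{k-1}$ together with the hypothesis $\mathbb{L}_i F^j(F(P)) = 0$ gives, by induction on $k \geq 1$, that every $F^k(P)$ is $F$-acyclic; second, the split sequence (\ref{F}) combined with the faithful exactness of $e$ yields $e(\mathbb{L}_i l(Y)) \cong \mathbb{L}_i F(Y)$, converting $F$-acyclicity of $F^k(P)$ into $l$-acyclicity; third, the Grothendieck spectral sequence for $ql \cong \Id_{\mathcal{B}}$ collapses on an $l$-acyclic input and forces $\mathbb{L}_i q(lF^k(P)) = 0$ for all $k \geq 0$ and $i > 0$. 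Since by \cite[Lemma 2.3]{Kos24} every $Q^p$ is a direct summand of some $l(P^p)$ with $P^p \in \Proj\mathcal{B}$, and $G^n(l(P^p)) = lF^n(P^p)$ (inductively from the splitting of (\ref{F})), additivity then gives $\mathbb{L}_1 q(G^n(Q^p)) = 0$.

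With this vanishing in hand, applying $q$ to the sequences produces short exact sequences of complexes
\begin{equation*}
0 \to q(G^{n+1}(Q^\bullet)) \to F^n e(Q^\bullet) \to q(G^n(Q^\bullet)) \to 0.
\end{equation*}
Starting from $q(G^N(Q^\bullet)) = 0$ and using the acyclicity of each $F^n e(Q^\bullet)$, downward induction via the long exact cohomology sequence delivers $q(Q^\bullet) = q(G^0(Q^\bullet))$ acyclic, completing the verification of (i)'s hypotheses.
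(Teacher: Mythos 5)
Your part (i) and your treatment of $\Hom_{\mathcal{A}}(Q^\bullet, i(P))$ coincide with the paper's argument (the paper keeps the short exact sequences $0\to G^{j+1}(i(P))\to lF^{j}(P)\to G^{j}(i(P))\to 0$ separate rather than splicing them, but the induction is identical). For the core step --- acyclicity of $q(Q^\bullet)$ --- you take a genuinely different route. The paper proves that $\mathbb{L}_iq$ vanishes in degrees $i>s$ on every Gorenstein projective object of $\mathcal{A}$: it uses the acyclicity of $F^j(eQ^\bullet)$ together with \cite[Lemmas 3.3 and 3.6]{Kos24} to get $\mathbb{L}_il(F^je(X))=0$ for the syzygies $X$ of $Q^\bullet$, hence $\mathbb{L}_iq(lF^je(X))=0$, and then dimension-shifts along (\ref{Gn}) applied to $X$; acyclicity of $q(Q^\bullet)$ follows since $H^i(qQ^\bullet)=\mathbb{L}_jq(X^{i+j})$. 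You instead work termwise: you apply $q$ to the termwise sequences $0\to G^{n+1}(Q^\bullet)\to lF^ne(Q^\bullet)\to G^n(Q^\bullet)\to 0$ and run a downward induction from $G^N(Q^\bullet)=0$, using the acyclicity of $F^ne(Q^\bullet)$ directly as middle terms. This buys you something: you only ever need derived-functor vanishing on objects built from projectives of $\mathcal{B}$ (namely $\mathbb{L}_iq(lF^k(P))=0$), not on the syzygies of the complete resolution, so you bypass \cite[Lemmas 3.3 and 3.6]{Kos24}; the cost is the extra input $\mathbb{L}_1q(G^n(Q^p))=0$. Your justification of that input via the identification $G^n(l(P))\cong lF^n(P)$ is the one step stated too glibly: the splitting argument only gives $G(l(P))\oplus l(P)\cong lF(P)\oplus l(P)$ unless you check naturality of the section of (\ref{F}), and cancellation is not automatic. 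This is easily repaired without the isomorphism: since $l(P)$ is projective, dimension-shifting along (\ref{Gn}) applied to $A=l(P)$, using that $lF^{n-1}(el(P))\cong lF^{n}(P)\oplus lF^{n-1}(P)$ is $q$-acyclic, gives $\mathbb{L}_iq(G^n(l(P)))=0$ for all $i\geq 1$ by induction on $n$, and additivity transfers this to the direct summand $G^n(Q^p)$. With that patch your proof is complete and correct.
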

\begin{proof}
(i) Let $X \in \GProj\mathcal{A}$ and let $(Q^\bullet,d^\bullet)$ be a complete $\mathcal{A}$-projective resolution
with $X=\Im (d^{0})$. Then $q(Q^\bullet)$ is an exact complex with all terms in $\Proj\mathcal{B}$.
Therefore, to show $q(X)\in \GProj\mathcal{B}$, it suffices to prove that
$\Hom _{\mathcal{B}}(q(Q^\bullet), P)$ is exact for
every $P \in \Proj \mathcal{B}$, and this follows from
the adjoint isomorphism $\Hom _{\mathcal{B}}(q(Q^\bullet), P)\cong \Hom _{\mathcal{A}}(Q^\bullet, i(P))$
and the assumption that $\Hom _{\mathcal{A}}(Q^\bullet , i(P))$ is acyclic.

(ii) Assume that there exists some integer $s$ such that $F^s=0$.
Let $X \in \Gproj\mathcal{A}$ and let $(Q^\bullet,d^\bullet)$ be a complete $\mathcal{A}$-projective resolution
with $X=\Im (d^{0})$. First, we claim that
$\mathbb{L}_iq(X)=0$ for any $i>s$.
For this, consider the acyclic complex $(e(Q^\bullet),e(d^\bullet))$ with $e(X)=\Im (e(d^{0}))$.
Since $\mathbb{L}_iF^j(F(P))=0$ for any $i,j>0$ and $P\in \Proj \mathcal{B}$, it follows from
\cite[Lemma 3.6]{Kos24} that $\mathbb{L}_iF^j(e(Q^s)))=0$ for any $i,j>0$ and $s\in \mathbb{Z}$.
Therefore, the complex $F^j(eQ^\bullet)$
computes $\mathbb{L}_iF^j(e(X))$, that is, $\mathbb{L}_iF^j(e(X))\cong H^{-i}(F^j(e(Q^\bullet)))$,
for any $i,j>0$. By assumption, we have that $F^j(eQ^\bullet)$ is acyclic, and then
$\mathbb{L}_iF^j(e(X))=0$ for any $i,j>0$. Applying
\cite[Lemma 3.3]{Kos24}, we obtain $\mathbb{L}_iF(F^j(e(X)))=0$ for any $i\geq 1$ and $j\geq0$,
and by \cite[Lemma 2.2 (ii)]{Kos24}, we get $\mathbb{L}_il(F^j(e(X)))=0$ for any $i\geq 1$ and $j\geq0$.
This shows that in order to compute $\mathbb{L}_iq(lF^j(e(X)))$, it is enough to begin with a projective resolution of
$F^j(e(X))$ and
apply $ql$. But $ql \cong \Id _{\mathcal{B}}$ and then we deduce that $\mathbb{L}_iq(lF^j(e(X)))=0$
for any $i\geq 1$ and $j\geq0$.
Now consider the
the following
short exact sequences arising from (\ref{Gn})
$$0\rightarrow G(X)\rightarrow le(X)\rightarrow X\rightarrow 0, 0\rightarrow G^2(X)\rightarrow lFe(X)\rightarrow G(X)\rightarrow 0, \cdots$$
which yield the following isomorphisms for any $i>t$
$$\mathbb{L}_iq(X)\cong \mathbb{L}_{i-1}q(G(X))\cong \mathbb{L}_{i-2}q(G^2(X))
\cong \cdots \cong \mathbb{L}_{i-t}q(G^t(X)).$$
Since $F^s=0$ and $F^se=eG^s$, we have $G^s=0$ and
for any $i>s$ we get that $\mathbb{L}_iq(X)\cong \mathbb{L}_{i-s}q(G^s(X))=0$.

Consider the exact complex $(Q^\bullet,d^\bullet)$ and let $X^i:=\Im (d^{i})$. Then for any $i\in \mathbb{Z}$
and any $j>0$,
we have $H^i(q(Q^\bullet))=\mathbb{L}_jq(X^{i+j})$ by the projective resolution
$$\cdots \rightarrow Q^i\rightarrow Q^{i+1}\rightarrow \cdots  Q^{i+j-1}\rightarrow  Q^{i+j}\rightarrow X^{i+j}\rightarrow 0.$$
Since $X^{i+j}\in \Gproj\mathcal{A}$, it follows from the above paragraph that $\mathbb{L}_jq(X^{i+j})=0$ for any $j>s$.
Therefore, for any $i\in \mathbb{Z}$, we may choose some $j>s$ and then $H^i(q(Q^\bullet))=\mathbb{L}_jq(X^{i+j})=0$.
That is, the complex $q(Q^\bullet)$ is acyclic, and it only remains to show
$\Hom _{\mathcal{A}}(Q^\bullet , i(P))$ is exact for
every $P \in \Proj \mathcal{B}$. For this, consider
the following short exact sequences arising from (\ref{Gn})
\begin{equation}\label{1}
0\rightarrow G(i(P))\rightarrow le(i(P))\rightarrow i(P)\rightarrow 0,
\end{equation}
\begin{equation}\label{2}
 0\rightarrow G^2(i(P))
\rightarrow lFe(i(P))\rightarrow G(i(P))\rightarrow 0,
\end{equation}
$$\cdots$$
\begin{equation}\label{3}
0\rightarrow 0=G^s(i(P))
\rightarrow lF^{s-1}e(i(P))\rightarrow G^{s-1}(i(P))\rightarrow 0.
\end{equation}
Since $ei \cong \Id_{\mathcal{B}}$
and $\Hom _{\mathcal{A}}(Q^\bullet , lF^j(P)) $
is exact,
we get that the complex $\Hom _{\mathcal{A}}(Q^\bullet , G^{s-1}(i(P)))$ is acyclic
by (\ref{3}). And so on, $\Hom _{\mathcal{A}}(Q^\bullet , G(i(P)))$ is acyclic
by (\ref{2}). Since $Q^\bullet$ is a complete $\mathcal{A}$-projective resolution
and $l(P)\in\Proj\mathcal{B}$,
we have that $\Hom _{\mathcal{A}}(Q^\bullet , l(P))$ is acyclic,
and by (\ref{1}), we get that $\Hom _{\mathcal{A}}(Q^\bullet , i(P))$ is also acyclic.
\end{proof}

We refer to the above conditions as the compatibility conditions on the cleft extension.
Our choice of the name ``compatible'' is motivated by the connection with the notion
of compatible bimodules, see Proposition~\ref{tri-matri-comp}.
\begin{definition}{\rm \label{def-comp-exten}
A cleft extension is
{\it compatible} if the conditions in Proposition~\ref{proposition-l-preserve-Gproj}
and Proposition~\ref{proposition-q-preserve-Gproj}
are satisfied.
}
\end{definition}

Applying Proposition~\ref{proposition-l-preserve-Gproj}
and Proposition~\ref{proposition-q-preserve-Gproj}, we get the following theorem immediately.
\begin{theorem}\label{thm-comp-exten}
Let $(\mathcal{B},\mathcal{A},
e,i,l)$ be a cleft extension which is compatible. Then
both $l$ and $q$ preserve Gorenstein projective objects.
\end{theorem}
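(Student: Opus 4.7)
The plan is essentially a bookkeeping argument, since Definition~\ref{def-comp-exten} is designed precisely to package together the hypotheses of Proposition~\ref{proposition-l-preserve-Gproj} and Proposition~\ref{proposition-q-preserve-Gproj}. First I would unpack what compatibility gives: on the one side, for every complete $\mathcal{B}$-projective resolution $P^{\bullet}$ and every $P' \in \Proj\mathcal{B}$, both $F(P^{\bullet})$ and $\Hom_{\mathcal{B}}(P^{\bullet}, F(P'))$ are acyclic; on the other side, $F$ is nilpotent, $\mathbb{L}_i F^j(F(P)) = 0$ for all $i,j > 0$ and $P \in \Proj\mathcal{B}$, and for every complete $\mathcal{A}$-projective resolution $Q^{\bullet}$, every $P \in \Proj\mathcal{B}$ and every $j>0$, both $F^j(eQ^{\bullet})$ and $\Hom_{\mathcal{A}}(Q^{\bullet}, lF^j(P))$ are acyclic.

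Given this, the two halves of the theorem are independent. For the statement about $l$, I would simply feed the first set of conditions into Proposition~\ref{proposition-l-preserve-Gproj}: starting from $X \in \GProj\mathcal{B}$ with complete projective resolution $P^{\bullet}$, the proposition already yields $l(X) \in \GProj\mathcal{A}$. For the statement about $q$, I would invoke Proposition~\ref{proposition-q-preserve-Gproj}(ii), whose hypotheses are exactly the second batch of compatibility conditions; that part of the proposition already asserts that the conditions of part (i) are met, and hence $q(X) \in \GProj\mathcal{B}$ for every $X \in \GProj\mathcal{A}$. Combining the two, both $l$ and $q$ restrict to functors on Gorenstein projective objects.

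There is no real obstacle here: the entire technical content lives in the two preceding propositions, and in particular the delicate nilpotence / vanishing arguments (using $\mathbb{L}_i F^j$, the short exact sequences \eqref{Gn} for $G^n$, and the fact $ql \cong \Id_{\mathcal{B}}$) have already been absorbed inside the proof of Proposition~\ref{proposition-q-preserve-Gproj}(ii). The only thing to verify in writing out the theorem is that one is genuinely matching the hypotheses of Definition~\ref{def-comp-exten} to the hypotheses of both propositions, after which the conclusion is immediate.
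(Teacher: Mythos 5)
Your proposal is correct and matches the paper's own proof, which likewise derives the theorem immediately by combining Proposition~\ref{proposition-l-preserve-Gproj} and Proposition~\ref{proposition-q-preserve-Gproj}, since Definition~\ref{def-comp-exten} is by construction the conjunction of their hypotheses. No further comment is needed.
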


Now we will compare Theorem~\ref{thm-comp-exten}
with \cite[Corollary 7.5]{Kos24}. Let's begin with the definition of perfect
functor from \cite{Kos24}.

\begin{definition}{\rm (\cite{Kos24})\label{def-ferfect-func}
A functor $F:\mathcal{B}\rightarrow \mathcal{B}$ is {\it perfect}
if the following conditions are satisfied:

(1) $\mathbb{L}_iF^j(F(P))=0$ for any $i,j>0$ and any $P\in \Proj \mathcal{B}$;

(2) there is some $n \geq 0$ such that $\mathbb{L}_iF^j= 0$ for all $i, j >0$ with $i + j \geq n + 1$;

(3) $\pd _{\mathcal{B}}F(P) < \infty$ for every $P \in \Proj \mathcal{B}$.

}
\end{definition}

\begin{proposition}\label{proposition-perfect}
Let $(\mathcal{B},\mathcal{A},
e,i,l)$ be a cleft extension of abelian categories. If the functor $F$
is perfect and nilpotent, then the cleft extension $(\mathcal{B},\mathcal{A},
e,i,l)$ is compatible.
\end{proposition}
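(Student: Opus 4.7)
The plan is to unpack Definition~\ref{def-comp-exten} and verify, one by one, the four acyclicity conditions it asks for --- two from Proposition~\ref{proposition-l-preserve-Gproj} and two from Proposition~\ref{proposition-q-preserve-Gproj}(i) --- using the three clauses of perfectness together with the nilpotency of $F$. For the conditions on $Q^\bullet$ in part~(i) I would verify instead the stronger hypotheses of Proposition~\ref{proposition-q-preserve-Gproj}(ii), since the proof there already shows (using nilpotency) how those assumptions imply the ones in~(i).

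The conditions of Proposition~\ref{proposition-l-preserve-Gproj} are the easier pair. For a complete $\mathcal{B}$-projective resolution $P^\bullet$ with Gorenstein projective cycles $Z$, iterated dimension-shifting through $0\to Z^n\to P^n\to Z^{n+1}\to 0$ identifies the cohomology of $F(P^\bullet)$ with $\mathbb{L}_k F$ of a Gorenstein projective cycle for arbitrarily large $k$; clause~(2) of perfectness forces $\mathbb{L}_k F=0$ for $k$ large, so $F(P^\bullet)$ is acyclic. A dual dimension-shift reduces acyclicity of $\Hom_{\mathcal{B}}(P^\bullet, F(P'))$ to the vanishing of $\Ext^i_{\mathcal{B}}(Z,F(P'))$ for $i\geq 1$ and $Z\in\GProj\mathcal{B}$, which holds automatically because $\pd_{\mathcal{B}} F(P')<\infty$ by clause~(3).

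The first hypothesis of Proposition~\ref{proposition-q-preserve-Gproj}(ii), $\mathbb{L}_iF^j(F(P))=0$, is clause~(1) of perfectness verbatim. For the acyclicity of $F^j(eQ^\bullet)$, I would first use \cite[Lemma 2.3]{Kos24} to write each projective term $Q^s\in\Proj\mathcal{A}$ as a direct summand of some $l(P)$, so that the split sequence~(\ref{F}) presents $e(Q^s)$ as a summand of $P\oplus F(P)$; clause~(1) together with the triviality of $\mathbb{L}_iF^j$ on projectives then yields $\mathbb{L}_iF^j(e(Q^s))=0$ for $i,j\geq 1$ (essentially \cite[Lemma 3.6]{Kos24}). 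The same kind of dimension-shift, applied at the cycles $e(X^s)$ with $X^s\in\GProj\mathcal{A}$ and terminating via clause~(2), propagates this vanishing; splicing the resulting short exact sequences then gives acyclicity of $F^j(eQ^\bullet)$.

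The delicate step is the acyclicity of $\Hom_{\mathcal{A}}(Q^\bullet, lF^j(P))$. I would reduce it to showing that $lF^j(P)$ has finite $\mathcal{A}$-projective dimension, since then $\Ext^i_{\mathcal{A}}(Z, lF^j(P))=0$ for every Gorenstein projective cycle $Z$ of $Q^\bullet$ and $i\geq 1$, making the Hom complex acyclic. To bound $\pd_{\mathcal{A}}(lF^j(P))$, I would first show by induction --- from clause~(3) at the base, using clause~(1) at each step --- that $\pd_{\mathcal{B}} F^j(P)<\infty$ for every $j\geq 1$. Then \cite[Lemma 3.3]{Kos24}, applied with the trivial input $\mathbb{L}_iF^j(P)=0$ on the projective $P$, yields $\mathbb{L}_iF(F^j(P))=0$ for $i\geq 1$, and \cite[Lemma 2.2(ii)]{Kos24} translates this into $\mathbb{L}_i l(F^j(P))=0$; consequently $l$ sends any finite $\mathcal{B}$-projective resolution of $F^j(P)$ to a finite $\mathcal{A}$-projective resolution of $lF^j(P)$. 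With all four compatibility conditions verified in this way, Proposition~\ref{proposition-q-preserve-Gproj}(ii) combined with the nilpotency of $F$ delivers the compatibility of the cleft extension.
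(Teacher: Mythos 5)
Your proposal is correct and follows essentially the same route as the paper: each compatibility condition is checked via dimension shifting along the cycles of the complete resolutions, using clause (2) of perfectness to kill the cohomology of $F(P^\bullet)$ and $F^j(eQ^\bullet)$, clause (3) plus an induction driven by clause (1) to get $\pd_{\mathcal{B}}F^j(P)<\infty$ and hence $\pd_{\mathcal{A}}lF^j(P)<\infty$, and the standard vanishing of $\Ext^{\geq 1}$ from a Gorenstein projective object into an object of finite projective dimension for the two Hom-complex conditions. The only differences are cosmetic (you re-derive the content of \cite[Lemma 3.6]{Kos24} from the splitting of the sequence (\ref{F}) rather than merely citing it, and you invoke \cite[Lemma 3.3]{Kos24} where the paper cites \cite[Lemma 3.2]{Kos24} for $\mathbb{L}_iF(F^j(P))=0$).
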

\begin{proof}
Let $(P^\bullet,d^\bullet)$ be a complete $\mathcal{B}$-projective resolution with $X^i:=\Im (d^{i})$.
Then for any $i\in \mathbb{Z}$
and any $j>0$,
we have $H^i(F(P^\bullet))=\mathbb{L}_jF(X^{i+j})$ by the projective resolution
$$\cdots \rightarrow P^i\rightarrow P^{i+1}\rightarrow \cdots  P^{i+j-1}\rightarrow  P^{i+j}\rightarrow X^{i+j}\rightarrow 0.$$
Since $F$ is perfect,
there is some $n \geq 0$ such that $\mathbb{L}_jF= 0$ for any $j \geq n$.
Therefore, we may choose some $j\geq n$ and then $H^i(F(P^\bullet))=\mathbb{L}_jF(X^{i+j})=0$
for any $i\in \mathbb{Z}$, and then the complex $F(P^\bullet)$ is acyclic.
Since $F$ is perfect, we have $\pd _{\mathcal{B}}F(P') < \infty$ for any $P'\in \Proj \mathcal{B}$.
Then the complex $\Hom _{\mathcal{B}}(P^\bullet , F(P'))$ is acyclic as its cohomologies are
$\Ext  _{\mathcal{B}}^i(X, F(P'))$ for some $i>0$ and $X \in \GProj\mathcal{B}$.
Therefore, the assumption in Proposition~\ref{proposition-l-preserve-Gproj} is satisfied.

Let $(Q^\bullet,d^\bullet)$ be a complete $\mathcal{A}$-projective resolution with $Y^i:=\Im (d^{i})$.
Consider the acyclic complex $(e(Q^\bullet),e(d^\bullet))$ with $e(Y^i)=\Im (e(d^{i}))$.
Since $F$ is perfect, we have that
$\mathbb{L}_iF^j(F(P))=0$ for any $i,j>0$ and $P\in \Proj \mathcal{B}$, and by
\cite[Lemma 3.6]{Kos24}, we infer that $\mathbb{L}_iF^j(e(Q^l)))=0$ for any $i,j>0$ and $l\in \mathbb{Z}$.
Therefore, the complex $F^j(eQ^\bullet)$
computes $\mathbb{L}_iF^j(e(Y^l))$, that is, $H^{i}(F^j(e(Q^\bullet))) \cong\mathbb{L}_kF^j(e(Y^{i+k}))$
for any $i\in \mathbb{Z}$
and any $j,k>0$.
Since $F$ is perfect, it follows that
$\mathbb{L}_kF^j= 0$ for any $k + j \geq n + 1$.
Therefore, for any $i\in \mathbb{Z}$ and any $j>0$, we may choose sufficiently large $k$ such that
$H^{i}(F^j(e(Q^\bullet)))\cong\mathbb{L}_kF^j(e(Y^{i+k}))=0$,
and then the complex $F^j(eQ^\bullet)$ is exact.

Since $F$ is perfect, we have $\pd _{\mathcal{B}}F(P) < \infty$ and $\mathbb{L}_iF(F(P))=0$
for every $P \in \Proj \mathcal{B}$ and $i>0$. Let
$$0 \rightarrow P^t\rightarrow P^{t+1}\rightarrow \cdots  P^{-1}\rightarrow  P^{0}\rightarrow F(P)\rightarrow 0$$
be a projective resolution of $F(P)$. Applying the functor $F$, we get an exact sequence
$$0 \rightarrow F(P^t)\rightarrow F(P^{t+1})\rightarrow \cdots  F(P^{-1})\rightarrow  F(P^{0})\rightarrow F^2(P)\rightarrow 0,$$
which shows that $\pd _{\mathcal{B}}F^2(P) < \infty$ since $\pd _{\mathcal{B}}F(P^i) < \infty$ for any $i$.
On the other hand, it follows from \cite[Lemma 3.2]{Kos24} that $\mathbb{L}_iF(F^j(P))=0$
for every $i,j>0$. Using the same method and by induction, we have that $\pd _{\mathcal{B}}F^j(P) < \infty$
for any $j>0$. Since $\mathbb{L}_iF(F^j(P))=0$, it follows from \cite[Lemma 2.2 (2)]{Kos24}
that $\mathbb{L}_il(F^j(P))=0$ for any $i,j>0$. Applying the functor $l$ to the finite projective resolution of
$F^j(P)$, we deduce that
$\pd _{\mathcal{A}}l(F^j(P)) < \infty$ for any $j>0$. Then the complex $\Hom _{\mathcal{A}}(Q^\bullet , lF^j(P))$ is acyclic as its cohomologies are
$\Ext  _{\mathcal{A}}^i(X, lF^j(P))$ for some $i>0$ and $X \in \GProj\mathcal{A}$.
Therefore, the assumption in Proposition~\ref{proposition-q-preserve-Gproj} (ii) is satisfied.
\end{proof}

Applying Proposition~\ref{proposition-perfect} and Theorem~\ref{thm-comp-exten}, we recover \cite[Corollary 7.5]{Kos24}
without the
condition that $\pd _{\mathcal{B}}F(P) \leq d$.

\begin{corollary}\label{cor-pres-Gproj}{\rm (Compare \cite[Corollary 7.5]{Kos24})}
Let $(\mathcal{B},\mathcal{A},
e,i,l)$ be a cleft extension of abelian categories. If $F$ is perfect and nilpotent,
then both $l$ and $q$ preserve Gorenstein projective objects, and $X \in \GProj\mathcal{B}$
if and only if $l(X) \in \GProj\mathcal{A}$.
\end{corollary}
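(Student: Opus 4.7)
The plan is to deduce the corollary as a direct consequence of Proposition~\ref{proposition-perfect} combined with Theorem~\ref{thm-comp-exten}. For the first assertion, Proposition~\ref{proposition-perfect} tells us that if $F$ is perfect and nilpotent then the cleft extension $(\mathcal{B},\mathcal{A}, e, i, l)$ is compatible in the sense of Definition~\ref{def-comp-exten}. Feeding this compatibility into Theorem~\ref{thm-comp-exten} yields at once that both $l$ and $q$ preserve Gorenstein projective objects, which is precisely the first claim of the corollary.

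For the equivalence $X \in \GProj\mathcal{B} \Leftrightarrow l(X) \in \GProj\mathcal{A}$, the forward direction is simply the preservation of Gorenstein projectives by $l$ that we have just obtained. For the converse, suppose $l(X) \in \GProj\mathcal{A}$. I would apply the functor $q$, which also preserves Gorenstein projectives by the first part, to conclude $q(l(X)) \in \GProj\mathcal{B}$. Now recall from Section~\ref{Section-definitions and conventions} that the adjoint pair $(q, i)$ satisfies $ql \cong \Id_{\mathcal{B}}$ (this was noted just after the definition of cleft extension using $ei \cong \Id_{\mathcal{B}}$ and the adjunction $(ql, ei)$). Therefore $q(l(X)) \cong X$, and since the subcategory $\GProj\mathcal{B}$ is closed under isomorphisms (a standing convention of the paper), we conclude $X \in \GProj\mathcal{B}$.

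There is essentially no obstacle here: the entire argument is a formal composition of Proposition~\ref{proposition-perfect}, Theorem~\ref{thm-comp-exten}, and the isomorphism $ql \cong \Id_{\mathcal{B}}$. The only point worth flagging is the comparison with \cite[Corollary 7.5]{Kos24}: the improvement over Kostas' statement is precisely that the extra hypothesis $\pd_{\mathcal{B}} F(P) \leq d$ is not needed, because Proposition~\ref{proposition-perfect} already extracts the full compatibility conditions from perfection and nilpotence of $F$ alone, and it is the compatibility — not a uniform bound on $\pd_{\mathcal{B}} F(P)$ — that drives the preservation of Gorenstein projectives in Theorem~\ref{thm-comp-exten}.
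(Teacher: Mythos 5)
Your proof is correct and follows exactly the same route as the paper: Proposition~\ref{proposition-perfect} plus Theorem~\ref{thm-comp-exten} for the preservation statement, and the isomorphism $ql \cong \Id_{\mathcal{B}}$ (combined with the fact that $q$ preserves Gorenstein projectives) for the converse of the final equivalence. You merely spell out the last step in more detail than the paper does.
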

\begin{proof}
The first statement follows from Proposition~\ref{proposition-perfect}
and Theorem~\ref{thm-comp-exten}, and the second holds since $ql \cong \Id_{\mathcal{B}}$.
\end{proof}

\begin{remark}\label{remark-comp-exten}
{\rm In view of Theorem~\ref{thm-comp-exten}, Proposition~\ref{proposition-perfect}
and Corollary~\ref{cor-pres-Gproj}, we see that
in order to restrict $q$ and $l$ to the subcategories of Gorenstein projective objects,
our compatibility conditions
on cleft extension are more general than that of
\cite[Corollary 7.5]{Kos24}. Indeed,
the converse of Proposition~\ref{proposition-perfect}
is not true. That is, the functor $F$ in a compatible cleft extension may not be perfect.
Let's illustrate this with the following example.}
\end{remark}

\begin{example}\label{ex-tri-mar} {\rm To keep the notations in \cite{Z13},
we just consider the finite generated version. That is,
in this example and in Proposition~\ref{tri-matri-comp},
all modules over an Artin ring $A$ are finite generated, and all complete projective $A$-resolutions are
exact complex of finite generated projective $A$-modules which remain exact applying $\Hom _A(-,P)$
for any finite generated projective $A$-module $P$.
Let $\Lambda =
\left[\begin{array}{cc} A & _AM_B \\ 0 & B  \end{array}\right] $
be a triangular matrix Artin algebra.
A finite generated left $\Lambda$-module is identified with a triple
$(X,Y,f)$, where $X\in{\mod A},Y\in{\mod B}$, and $f:M\otimes_{B}Y\rightarrow{X}$ is a morphism of left $A$-modules.
It follows from \cite[Example 5.1]{Kos24} that
there is a cleft extension
$$\xymatrix@!=8pc{ \mod (A\times B) \ar[r]|{i} & \mod \Lambda
			 \ar[r]|{e} \ar@/_2pc/[l]|{q} & \mod (A\times B)
			\ar@/_2pc/[l]|{l} \ar@(ur,ul)_F    },  $$
where $q(X,Y,f)=(\Coker f,Y)$,
$i(X,Y)=(X,Y,0)$, $l(X,Y)=(X\oplus M\otimes Y,Y, \tiny {\left(\begin{array}{cc} 0 \\ 1  \end{array}\right)})$,
$e(X,Y,f)=(X,Y)$ and $F(X,Y)=(M\otimes _BY,0)$.
From \cite{Z13}, $_AM_B$ is compatible if $M\otimes _B-$ sends every acyclic complex of projective
$B$-modules to acyclic complex, and $\Hom _A(-,M)$ sends any complete $A$-projective resolution to acyclic complex.
In the following we will claim that
if $_AM_B$ is a compatible bimodule,
then the cleft extension $(\mod (A\times B), \mod \Lambda, i,e,l)$ is compatible.
In particular, if $\pd M_B<\infty$ and $\id _AM<\infty$, then it follows from \cite[Proposition 1.3]{Z13}
that $_AM_B$ is compatible and then the cleft extension $(\mod (A\times B), \mod \Lambda, i,e,l)$ is compatible.
But in this case the functor $F$ may not be perfect,
because this occurs precisely when $\fd M_B<\infty$ and $\pd _AM<\infty$, see
\cite[Example 5.1]{Kos24}.
}
\end{example}

\begin{proposition}\label{tri-matri-comp}
Keep the notations in Example~\ref{ex-tri-mar}. If $_AM_B$ is a compatible bimodule,
then the cleft extension $(\mod (A\times B), \mod \Lambda, i,e,l)$ is compatible.
\end{proposition}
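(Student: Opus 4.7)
The plan is to unpack the definition of compatibility (Definition~\ref{def-comp-exten}) into the four conditions coming from Proposition~\ref{proposition-l-preserve-Gproj} and Proposition~\ref{proposition-q-preserve-Gproj}(ii), and verify each one by translating it into a statement about the bimodule $_AM_B$ and complexes over $A$ and $B$, at which point the two defining properties of a compatible bimodule from \cite{Z13} will apply directly. Throughout, I would exploit the explicit descriptions $F(X,Y)=(M\otimes_B Y,0)$, $l(X,Y)=(X\oplus M\otimes_B Y, Y,\!{\scriptstyle\binom{0}{1}})$, $i(X,Y)=(X,Y,0)$ and $q(X,Y,f)=(\Coker f,Y)$, together with the fact that every indecomposable projective in $\mod\Lambda$ is of the form $(P_A,0,0)$ or $(M\otimes_B P_B,P_B,\id)$ with $P_A,P_B$ finitely generated projective over $A,B$.

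First I would verify the two conditions of Proposition~\ref{proposition-l-preserve-Gproj}. Any complete projective resolution $P^\bullet$ over $\mod(A\times B)$ decomposes as $(P_A^\bullet,P_B^\bullet)$ where $P_A^\bullet$ and $P_B^\bullet$ are complete projective resolutions over $A$ and $B$; hence $F(P^\bullet)=(M\otimes_B P_B^\bullet,0)$ is acyclic by the first half of compatibility of $_AM_B$. For any projective $P'=(P'_A,P'_B)\in\mod(A\times B)$, the complex $\Hom_{\mathcal{B}}(P^\bullet,F(P'))\cong \Hom_A(P_A^\bullet,M\otimes_B P'_B)$, and since $P'_B$ is finitely generated projective, $M\otimes_B P'_B$ is a direct summand of some $M^n$; acyclicity then reduces to the second half of compatibility, namely that $\Hom_A(P_A^\bullet,M)$ is acyclic.

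Next I would handle Proposition~\ref{proposition-q-preserve-Gproj}(ii). Since $F^2(X,Y)=(M\otimes_B 0,0)=0$, the functor $F$ is already nilpotent with $F^2=0$, which trivializes both the $\mathbb{L}_iF^j(F(P))=0$ hypothesis and the conditions for $j\geq 2$. The only nontrivial case is $j=1$. Taking any complete $\Lambda$-projective resolution $Q^\bullet$ with $Q^n=(P_A^n\oplus M\otimes_B P_B^n,\,P_B^n,\,{\scriptstyle\binom{0}{1}})$, acyclicity of $e(Q^\bullet)$ forces $P_B^\bullet$ to be an acyclic complex of projective $B$-modules, so $F(eQ^\bullet)=(M\otimes_B P_B^\bullet,0)$ is acyclic by compatibility of $_AM_B$.

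The main obstacle, and the one step that I expect to require care, is showing that $\Hom_{\mathcal{A}}(Q^\bullet,lF(P))$ is acyclic for any projective $P=(P_A,P_B)\in\mod(A\times B)$. Here I would first observe that $lF(P)=l(M\otimes_B P_B,0)=(M\otimes_B P_B,0,0)=i(M\otimes_B P_B,0)$, so by the $(q,i)$-adjunction the complex in question equals $\Hom_{A\times B}(q(Q^\bullet),(M\otimes_B P_B,0))$. Since $\Coker f^n=P_A^n$, this is nothing but $\Hom_A(P_A^\bullet,M\otimes_B P_B)$. To apply compatibility of $_AM_B$, I need $P_A^\bullet$ to itself be a \emph{complete} $A$-projective resolution, not merely an acyclic complex of projectives. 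Acyclicity follows from the short exact sequence of complexes
\[
0\longrightarrow l(0,P_B^\bullet)\longrightarrow Q^\bullet\longrightarrow i(P_A^\bullet,0)\longrightarrow 0
\]
together with the acyclicity of $M\otimes_B P_B^\bullet$ and of $e(Q^\bullet)$. For the $\Hom$-exactness, the key trick is that for any finitely generated projective $A$-module $T$, the $\Lambda$-module $i(T,0)=(T,0,0)$ is itself projective, so $\Hom_\Lambda(Q^\bullet,i(T,0))$ is acyclic by the defining property of $Q^\bullet$; and this $\Hom$-complex equals $\Hom_A(P_A^\bullet,T)$ by the same adjunction/$\Coker$ identification. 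Once $P_A^\bullet$ is known to be a complete $A$-projective resolution, writing $M\otimes_B P_B$ as a summand of some $M^n$ and invoking compatibility of $_AM_B$ finishes the argument.
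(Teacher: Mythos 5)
Your proposal is correct and follows essentially the same route as the paper's proof: decompose complete $(A\times B)$-resolutions componentwise, identify $F(-)$ and $\Hom(-,F(P'))$ with $M\otimes_B(-)$ and $\Hom_A(-,M)$ on those components, and for the $q$-side conditions use $lF(P)=i(M\otimes_B P_B,0)$ together with the adjunction and the key observation that the $A$-component $P_A^\bullet$ of a complete $\Lambda$-projective resolution is itself a complete $A$-projective resolution (acyclicity via the subcomplex $M\otimes_B P_B^\bullet$, $\Hom$-exactness via the projectivity of $i(T,0)$). The only imprecision is your claim that $F^2=0$ by itself ``trivializes'' the hypothesis $\mathbb{L}_iF(F(P))=0$ for $j=1$: a right exact functor with $F^2=0$ need not have vanishing higher derived functors on its image, and the actual reason here (spelled out in the paper) is that a projective resolution of $F(P)=(M\otimes_B P_B,0)$ has all terms of the form $(T,0)$, on which $F$ vanishes --- a one-line fix, but one you should state.
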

\begin{proof}
Let $(P^\bullet, \widetilde{P}^\bullet)$
be a complete $A\times B$-projective resolution. Then $P^\bullet$
is a complete $A$-projective resolution and $\widetilde{P}^\bullet$ is a complete $B$-projective resolution.
Therefore, the complex $F(P^\bullet, \widetilde{P}^\bullet)
=(M\otimes _B\widetilde{P}^\bullet,0)$ is acyclic by the definition of compatible bimodule. Since every projective object
in $\mod (A\times B)$ is a direct summand of some copy of $(A,B)$, and $$\Hom _{A\times B}((P^\bullet, \widetilde{P}^\bullet),
F(A,B))=\Hom _{A\times B}((P^\bullet, \widetilde{P}^\bullet),
(M,0))\cong\Hom _A(P^\bullet, M),$$
we have that
the complex $\Hom _{A\times B}((P^\bullet, \widetilde{P}^\bullet),
F(A,B))$
is acyclic and thus the assumption in Proposition~\ref{proposition-l-preserve-Gproj} is satisfied.
On the other hand, we notice that $F^2=0$, and thus $F$ is nilpotent.
Since $F(A,B)=(M,0)$, so, in order to compute $\mathbb{L}_iF(F(A,B))$,
we begin with a projective
resolution of $(M,0)$ and apply $F$.
But all terms of this projective
resolution are direct summand of some copy of $(A,0)$, and then the fact $F(A,0)=0$
implies that $\mathbb{L}_iF(F(A,B))=0$ for any $i>0$.
Let $L^\bullet$ be a complete $\Lambda$-projective resolution.
By \cite[p.76]{ARS95}, all indecomposable projective $\Lambda$-modules are exactly
$(P,0,0)$ and $(M\otimes_BQ,Q,\Id)$, where $P$ runs over indecomposable projective
$A$-modules, and $Q$ runs over indecomposable projective $B$-modules.
So we may assume that the complex $L^\bullet$ is of the form
$$\xymatrix@!=6.2pc{\cdots \ar[r]& (P^{i}\oplus M\otimes _BQ^{i}, Q^{i}, \tiny {\left(\begin{array}{cc} 0 \\ 1  \end{array}\right)} )
\ar[rr]^(.45){(\varphi ^{i}, d^{i})}  & &
(P^{i+1}\oplus M\otimes _BQ^{i+1}, Q^{i+1}, \tiny {\left(\begin{array}{cc} 0 \\ 1  \end{array}\right)} )
\ar[r] & \cdots}, $$
and we deduce $\varphi ^{i}= \left(\begin{array}{cc} \delta ^{i} & 0 \\ \sigma ^{i} & \Id \otimes d^{i} \end{array}\right) $
form the commutative diagram
$$\xymatrix{M\otimes _BQ^i \ar[rr]^{ \tiny {\left(\begin{array}{cc} 0 \\ 1  \end{array}\right)}  }
\ar[d]_{\Id \otimes d^{i}}& & P^{i}\oplus M\otimes _BQ^{i} \ar[d]^{\varphi ^i}\\
M\otimes _BQ^{i+1} \ar[rr]^{ \tiny {\left(\begin{array}{cc} 0 \\ 1  \end{array}\right)}  }  & &P^{i+1}\oplus M\otimes _BQ^{i+1}. }$$
Therefore, the complex $F(eL^\bullet)$ is of the form
$$\xymatrix@!=4pc{\cdots \ar[r] & (M\otimes _BQ^i,0)
\ar[rr]^{(\Id \otimes d^{i},0)} & &(M\otimes _BQ^{i+1},0)\ar[r] & \cdots}.$$
Moreover, the exactness of $L^\bullet$ yeilds that both $(P^\bullet \oplus M\otimes _BQ^\bullet, \varphi^\bullet)$ and $(Q^\bullet, d^\bullet)$
are exact, and then $F(eL^\bullet)$ is acyclic since $M$ is compatible.
Note that $lF(A,B)=l(M,0)=(M,0,0)=i(M,0)$, and then we have
\begin{align*}
	\Hom _\Lambda(L^\bullet , lF(A,B)) &= \Hom _\Lambda(L^\bullet , i(M,0))\\
	&\cong \Hom _\Lambda(q(L^\bullet), (M,0)) \\
	&\cong \Hom _{A\times B}((P^\bullet, Q^\bullet), (M,0)) \\
    &\cong \Hom _{A}(P^\bullet, M),
    \end{align*}
where the first
isomorphism follows by adjunction.
Now we claim that $P^\bullet$ is a complete $A$-projective resolution,
and then $\Hom _\Lambda(L^\bullet , lF(A,B))\cong \Hom _{A}(P^\bullet, M)$ is acyclic and
the condition in Proposition~\ref{proposition-q-preserve-Gproj} is satisfied.
Indeed, there is an exact sequence of complexes of left $A$-modules
$$\xymatrix{0 \ar[r]& (M\otimes _BQ^\bullet, \Id \otimes d^{\bullet})
\ar[r]^{ \tiny {\left(\begin{array}{cc} 0 \\ 1  \end{array}\right)}  } & (P^{\bullet}\oplus
M\otimes _BQ^{\bullet}, \varphi^\bullet) \ar[r]^(.7){ \tiny {\left(\begin{array}{cc} 1 &0  \end{array}\right)}  } & (P^{\bullet},\delta ^\bullet)
\ar[r] & 0}. $$
and the exactness of $L^\bullet$ yeilds that both $(P^\bullet \oplus M\otimes _BQ^\bullet, \varphi ^\bullet)$ and $(Q^\bullet, d^\bullet)$
are exact. Therefore, we get that $(M\otimes _BQ^\bullet, \Id \otimes d^{\bullet})$ is acyclic since $M$ is compatible,
and then
the complex $(P^{\bullet}, \delta ^\bullet )$ is also acyclic. On the other hand,
$\Hom _A(P^{\bullet}, A)=\Hom _{A\times B}((P^{\bullet},Q^{\bullet}), (A,0))
=\Hom _{A\times B}(q(L^{\bullet}), (A,0))$, which is isomorphic to $\Hom _{\Lambda}(L^{\bullet}, i(A,0))$
by adjunction. Since $L^\bullet$ is a complete $\Lambda$-projective resolution
and $i(A,0)=(A,0,0)$ is a projective $\Lambda$-module, we deduce that
$\Hom _{\Lambda}(L^{\bullet}, i(A,0))$ is acyclic and so is $\Hom _A(P^{\bullet}, A)$.

\end{proof}

Next, we will give some necessary conditions for an object of $\mathcal{A}$ to be Gorenstein projective,
and we show that these necessary conditions are also sufficient in some special case.
Let's begin with some notations in cleft extensions.
From (\ref{G}) and (\ref{F}), there are two short exact sequences
$$\xymatrix{0 \ar[r]& G
\ar[r]^{u} & le \ar[r]^{\lambda} &\Id_{\mathcal{A}} \ar[r] & 0 } $$
and $$\xymatrix{0 \ar[r]& F
\ar@<+0.8ex>[r]^{\nu} & el \ar[r] \ar@<+0.8ex>[l]^{\nu '} &\Id_{\mathcal{A}} \ar[r] & 0 }, $$
where $u$, $\lambda$, $\nu$ and $\nu'$ are natural transformations.
Then for any $X\in \mathcal{A}$, we have a long exact
sequence $$\xymatrix@!=1.5pc{\cdots \ar[rr] & &leG^2(X)
\ar[rr] \ar @{->>}[rd]_{\lambda _{G^2(X)}} &  & leG(X) \ar[rr] \ar @{->>}[rd]|{\lambda _{G(X)}} & &le(X) \ar[rr]^{\lambda _X} & & X \ar[r] & 0    \\
& & & G^2(X)\ar @{^{(}->}[ur]|{u _{G(X)}} & & G(X)\ar @{^{(}->}[ur]_{u _X} & & &}. $$
Applying the functor $q$, and using the fact that $ql\cong \Id _{\mathcal{B}}$ and $eG^i\cong F^ie$, we obtain the following complex
$$\xymatrix@!=1.5pc{\cdots \ar[rr] & &F^2e(X)
\ar[rr]^{\alpha _X} \ar @{->>}[rd]_{q(\lambda _{G^2(X)})} &  & Fe(X) \ar[rr]^{\beta _X} \ar @{->>}[rd]|{q(\lambda _{G(X)})} & &e(X) \ar[rr]^{q(\lambda _X)} & & q(X) \ar[r] & 0    \\
& & & q(G^2(X))\ar [ur]|{q(u _{G(X)})} & & q(G(X))\ar [ur]_{q(u _X)} & & &},$$
where $q(u _{G(X)})$ and $q(u _X)$ may not be monomorphic.

\begin{theorem}\label{theorem-Gpro-condition}
Let $(\mathcal{B},\mathcal{A},
e,i,l)$ be a cleft extension which is compatible. Take $X\in \mathcal{A}$
and consider
the following conditions:

{\rm (1)} $X \in \GProj\mathcal{A}$;

{\rm (2)} $q(X) \in \GProj\mathcal{B}$ and $q(u _X)$ is a monomorphism;

{\rm (3)} $q(X) \in \GProj\mathcal{B}$ and the sequence $\xymatrix{F^2e(X)
\ar[r]^{\alpha _X} & Fe(X) \ar[r]^{\beta _X} & e(X) } $ is exact.

Then we have {\rm (1)} $\Rightarrow$ {\rm (2)} $\Leftrightarrow$ {\rm (3)}.
Moreover, all three conditions are equivalent if the natural transformation $\eta : F^2\rightarrow F$ arising from the composition
$$\xymatrix{F^2(Y) \ar[r]^{\nu _{FY}}& elF(Y)
\ar[r]^{el(\nu _Y)} & elel(Y) \ar[r]^{e(\lambda _{lY})} & el(Y) \ar[r]^{\nu '_Y} & F(Y) } $$
is zero, where $Y$ is an object in $\mathcal{B}$.

\end{theorem}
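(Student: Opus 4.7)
The plan is to establish the chain (1) $\Rightarrow$ (2) $\Leftrightarrow$ (3) first and then address the additional implication (3) $\Rightarrow$ (1) under the hypothesis $\eta=0$.

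For (1) $\Rightarrow$ (2), I would invoke Theorem~\ref{thm-comp-exten} to obtain $q(X)\in\GProj\mathcal{B}$, and then apply the right-exact functor $q$ to the short exact sequence $0\to G(X)\to le(X)\to X\to 0$ to produce the four-term exact sequence
\begin{equation*}
\mathbb{L}_1q(X)\to qG(X)\xrightarrow{q(u_X)} qle(X)\cong e(X)\xrightarrow{q(\lambda_X)} q(X)\to 0.
\end{equation*}
The compatibility condition forces $\mathbb{L}_iq(X)=0$ for all $i>0$ whenever $X\in\GProj\mathcal{A}$: picking any complete $\mathcal{A}$-projective resolution $Q^\bullet$ with $X=\Im(d^0)$, Proposition~\ref{proposition-q-preserve-Gproj}(i) gives that $q(Q^\bullet)$ is acyclic, and this complex computes $\mathbb{L}_iq(X)$. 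In particular $\mathbb{L}_1q(X)=0$, hence $q(u_X)$ is injective.

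For (2) $\Leftrightarrow$ (3) I would perform a diagram chase in the complex displayed just before the theorem, and I expect the common hypothesis $q(X)\in\GProj\mathcal{B}$ not to be needed in the equivalence. Two facts drive the argument: first, each $q(\lambda_{G^k(X)})\colon F^ke(X)\twoheadrightarrow qG^k(X)$ is an epimorphism by right-exactness of $q$; second, right-exactness of $q$ applied to $(\ref{Gn})$ with $n=2$ gives $\Im q(u_{G(X)})=\Ker q(\lambda_{G(X)})$ inside $Fe(X)$. Combining, $\Im(\alpha_X)=\Im q(u_{G(X)})=\Ker q(\lambda_{G(X)})$. On the other hand, pulling $\Ker q(u_X)$ back along the epimorphism $q(\lambda_{G(X)})$ yields a short exact sequence $0\to\Ker q(\lambda_{G(X)})\to\Ker(\beta_X)\to\Ker q(u_X)\to 0$. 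Hence $\Im(\alpha_X)=\Ker(\beta_X)$ precisely when $\Ker q(u_X)=0$, which is the monomorphism condition.

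For the \emph{moreover} clause I would construct a complete $\mathcal{A}$-projective resolution of $X$ explicitly, starting from a complete $\mathcal{B}$-projective resolution $P^\bullet$ of $q(X)$. By Theorem~\ref{thm-comp-exten} the functor $l$ preserves Gorenstein projective objects, so each $lF^{k-1}e(X)$ supplied by the short exact sequences $(\ref{Gn})$ is Gorenstein projective in $\mathcal{A}$, and I would splice these layers along the filtration $\cdots\to G^2(X)\to G(X)\to X$ for the right half of the resolution, using $l(P^\bullet)$ on the left. The role of $\eta=0$ is to propagate the single exactness of $F^2e(X)\to Fe(X)\to e(X)$ given by (3) to the full tower $\cdots\to F^{k+1}e(X)\to F^ke(X)\to\cdots$: a direct computation should identify $\eta_{e(X)}$ with the nontrivial component of the iterated differential $F^{k+1}e(X)\to F^{k-1}e(X)$ under the splitting $el\cong F\oplus\Id_{\mathcal{B}}$, so its vanishing forces all the higher squares to vanish automatically. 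The main obstacle is precisely this last implication: even granted acyclicity of the $F$-tower, one still has to verify the $\Hom_{\mathcal{A}}(-,\Proj\mathcal{A})$-acyclicity that turns the spliced complex into a complete projective resolution, which by adjunction reduces to acyclicity statements in $\mathcal{B}$ covered by the compatibility hypotheses, but the book-keeping is where the argument is most delicate.
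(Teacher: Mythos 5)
Your arguments for (1) $\Rightarrow$ (2) and for (2) $\Leftrightarrow$ (3) are correct, though they take slightly different routes from the paper. For (1) $\Rightarrow$ (2) you extract $\mathbb{L}_1q(X)=0$ from the acyclicity of $q(Q^\bullet)$ (which is indeed part of compatibility, since $H^{-j}(q(Q^\bullet))\cong\mathbb{L}_jq(X)$) and feed it into the long exact sequence of $0\to G(X)\to le(X)\to X\to 0$; the paper instead uses that each $0\to G(Q^i)\to le(Q^i)\to Q^i\to 0$ splits to get the exact sequence of complexes $0\to qG(Q^\bullet)\to e(Q^\bullet)\to q(Q^\bullet)\to 0$, deduces that $qG(Q^\bullet)$ is acyclic, and transports the resulting monomorphism $qG(X)\to qG(Q^1)$ through a commutative square. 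The two arguments are equivalent in substance. Your proof of (2) $\Leftrightarrow$ (3), via $\Im(\alpha_X)=\Ker q(\lambda_{G(X)})$ together with the exact sequence $0\to\Ker q(\lambda_{G(X)})\to\Ker(\beta_X)\to\Ker q(u_X)\to 0$, is a clean substitute for the paper's pullback chase, and you are right that $q(X)\in\GProj\mathcal{B}$ plays no role in that equivalence.

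The \emph{moreover} clause is where the proposal has a genuine gap. The claim that each $lF^{k-1}e(X)$ is Gorenstein projective because $l$ preserves Gorenstein projectives would require $F^{k-1}e(X)\in\GProj\mathcal{B}$, which neither (2) nor (3) provides: compatibility only says that $F$ sends complete projective resolutions to acyclic complexes, not that it preserves Gorenstein projectivity (in the triangular-matrix example $F(X,Y)=(M\otimes_BY,0)$, which is essentially never Gorenstein projective over $A\times B$). Even granting that claim, splicing Gorenstein projective layers does not yield a complete \emph{projective} resolution. You have also misidentified the role of $\eta=0$: the paper uses it (via Beligiannis) to get $\alpha_X=F(\beta_X)$, hence $qG(X)\cong\Coker(\alpha_X)\cong F(\Coker(\beta_X))\cong Fq(X)$, which produces the short exact sequence $0\to Fq(X)\to e(X)\to q(X)\to 0$ and reduces the problem to the trivial-extension situation $\mathcal{A}\simeq\mathcal{B}_F(0)$. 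From there the complete resolution of $X$ is assembled with terms $l(P^i)$, for $P^\bullet$ a complete $\mathcal{B}$-projective resolution of $q(X)$, by the generalized horseshoe lemma, the needed vanishing $\Ext^1_{\mathcal{B}}(\Ker d^i,F(P'))=0$ coming from compatibility. Your sketch never produces this short exact sequence and explicitly defers the $\Hom_{\mathcal{A}}(-,\Proj\mathcal{A})$-acyclicity check, so the implication (2) $\Rightarrow$ (1) remains unproved.
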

\begin{proof}
(1) $\Rightarrow$ (2): Let $X \in \GProj\mathcal{A}$ and $(Q^\bullet,d^\bullet)$ be a complete $\mathcal{A}$-projective resolution
with $X=\Im (d^{0})$. Then it follows from Theorem~\ref{thm-comp-exten} that
$q(X) \in \GProj\mathcal{B}$.
Since $Q^i\in \Proj\mathcal{A}$,
the exact sequence $$\xymatrix{0 \ar[r]& G(Q^i)
\ar[r]^{u _{Q^i}} & le(Q^i) \ar[r]^{\lambda  _{Q^i}} &Q^i \ar[r] & 0 }$$
is split,
and then $q(u _{Q^i})$ is a monomorphism. Therefore, we have an exact sequence
of complexes
\begin{equation}\label{exact-comp}
\xymatrix{0 \ar[r]& qG(Q^\bullet)
\ar[r]^{q(u _{Q^i})} & qle(Q^\bullet) \ar[r]^{q(\lambda  _{Q^i})} &q(Q^\bullet) \ar[r] & 0 .}
\end{equation}
Note that $e$ is an exact functor, and then the complex $qle(Q^\bullet)\cong e(Q^\bullet)$ is acyclic.
Since the cleft extension is compatible, we have that
the complex $q(Q^\bullet)$ is acyclic, and then we get that $qG(Q^\bullet)$ is also acyclic by (\ref{exact-comp}).
Let $\varepsilon: X\rightarrow Q^1$ be the monomorphism arising from the acyclic complex $(Q^\bullet,d^\bullet)$.
Then the exactness of $qG(Q^\bullet)$ implies that $qG(\varepsilon)$ is a monomorphism. By the commutative diagram
$$ \xymatrix{qG(X)\ar[rr]^{q(u _X)}\ar[d]^{qG(\varepsilon)} & & qle(X)\cong e(X) \ar[d]^{e(\varepsilon)}\\
qG(Q^1)\ar[rr]^{q(u _{Q^1})}& & qle(Q^1)\cong e(Q^1), } $$
we infer that $e(\varepsilon) \circ q(u _X)$ is a monomorphism, and thus $q(u _X)$ is a monomorphism.

{\rm (2)} $\Leftrightarrow$ {\rm (3)}: This is well-known. Here, we give a proof for readers' convenience.

If $q(u _X)$ is a monomorphism, then $\Ker (\beta _X)=\Ker (q(u _X) \circ q(\lambda _{G(X)}))
=\Ker(q(\lambda _{G(X)}))$, which is equal to $\Im (\alpha _X)$ since the sequence
$$ \xymatrix{F^2e(X)
\ar[r]^{\alpha _X} & Fe(X) \ar[r]^{q(\lambda _{G(X)})} & q(G(X)) \ar[r] &0}  $$
is exact. This shows that the sequence $\xymatrix{F^2e(X)
\ar[r]^{\alpha _X} & Fe(X) \ar[r]^{\beta _X} & e(X) } $ is exact.

Conversely, assume that the sequence $\xymatrix{F^2e(X)
\ar[r]^{\alpha _X} & Fe(X) \ar[r]^{\beta _X} & e(X) } $ is exact. Then $\Ker (\beta _X)=\Im (\alpha _X)
=\Ker(q(\lambda _{G(X)}))$. Let $\kappa:K\rightarrow Fe(X)$ be a morphism such that $\kappa=\Ker (\beta _X)=\Ker(q(\lambda _{G(X)}))$.
In order to show $q(u _X)$ is a monomorphism, we assume that
$q(u _X)\circ h=0$ for some $h:U\rightarrow q(G(X))$, and then we claim that $h=0$.
For this, consider the following commutative diagram
$$\xymatrix@!=1.5pc{K
\ar[rr]^{\kappa}  &  & Fe(X) \ar[rr]^{\beta _X} \ar @{->>}[rd]|{q(\lambda _{G(X)})} & &e(X)   \\
 & E\ar [ur]_{\gamma} \ar @{->>}[rd]_{\delta} \ar @{.>}[lu]^\theta & & q(G(X))\ar [ur]_{q(u _X)} &\\
 & &U \ar [ur]_{h} & &},$$ where $(E,\gamma,\delta)$ is the pull-back of $h$ and $q(\lambda _{G(X)})$.
Then $\delta$ is an epimorphism and $\beta _X \circ \gamma =q(u _X) \circ q(\lambda _{G(X)}) \circ \gamma
 =q(u _X) \circ h \circ \delta =0$. Therefore, there is a morphism $\theta: E\rightarrow K$
 such that $\gamma = \kappa \circ \theta$, and then $h \circ \delta=q(\lambda _{G(X)}) \circ \gamma=
 q(\lambda _{G(X)}) \circ \kappa \circ \theta$, which is equal to zero since $\kappa=\Ker(q(\lambda _{G(X)}))$.
Consequently, we infer that $h=0$ since $\delta$ is epimorphic.

{\rm (2)} $\Rightarrow$ {\rm (1)}: Assume $X \in \mathcal{A}$
such that $q(X) \in \GProj\mathcal{B}$ and $q(u _X)$ is a monomorphism.
If $\eta =0$, then it follows from \cite[Section 5.2]{Bel00}
that $\alpha _X=F(\beta _X)$. Therefore, we have that $qG(X)\cong \Coker(\alpha _X)=
\Coker(F(\beta _X))\cong F(\Coker(\beta _X))=Fq(X)$, where the last isomorphism
holds since the functor $F$ is right exact. Consequently, we have an exact sequence
\begin{equation}\label{exact-trivial-exten}
\xymatrix{0 \ar[r]& Fq(X)
\ar[r]^{q(u _X)} & e(X) \ar[r]^{q(\lambda _X)} & q(X) \ar[r] & 0 .}
\end{equation}
By \cite[Theorem 2.6]{Bel00}, there is an equivalence of categories $\mathcal{A}\cong \mathcal{B}_F(0)$,
where the category $\mathcal{B}_F(0)$ is defined as follows:
the objects in $\mathcal{B}_F(0)$ are of the forms $(X,f)$ where $X\in \mathcal{B}$
and $f:F(X)\rightarrow X$ is
a morphism in $\mathcal{B}$ with $f \circ F(f)=0$, and a morphism $a:(X,f)\rightarrow (Y,g) $ in
$\mathcal{B}_F(0)$ is a morphism $a:X\rightarrow Y$ in $\mathcal{B}$ with
$a\circ f=g\circ F(a)$. Therefore, we are in the situation of trivial extension of abelian categories.
Let $(P^\bullet,d^\bullet)$ be a complete $\mathcal{B}$-projective resolution
with $q(X)=\Im (d^{0})$. Since the cleft extension is compatible,
we get that
the complex $\Hom _{\mathcal{B}}(P^\bullet , F(P'))$ is acyclic for any $P' \in \Proj\mathcal{B}$,
and thus $\Ext _{\mathcal{B}}^1(\Ker d^i,F(P'))=0$. Applying the generalized horseshoe lemma of abelian categories
(\cite[Lemma 2.1]{PZH22}) to (\ref{exact-trivial-exten}),
and using the same method as in \cite[Theorem 3.3]{M25}, we obtain a complete $\mathcal{A}$-projective resolution
$$\xymatrix{\Delta: \cdots \ar[r]& l(P^0)
\ar[r]^{g^i} & l(P^1) \ar[r] & \cdots}$$
with $X=\Im (g^{0})$, and then $X \in \GProj\mathcal{A}$.
\end{proof}

\section{Applications and examples}
\indent\indent
In this section, we will give some applications
in triangular matrix rings, Morita context rings, trivial ring extensions
and $\theta$-extensions.
\begin{definition}
{\rm Let $R$ be a ring, $M$ a $R$-$R$-bimodule and $\theta : M \otimes _{R} M \rightarrow M $ an
associative $R$-bimodule homomorphism. The {\it $\theta$-extension} of $R$ by $M$, denoted by
$R\ltimes _{\theta} M$, is defined to be the ring with underlying group $R \oplus M$ and multiplication
given as follows: $$(r, m)\cdot(r',m'):=(r r',r m'+mr'+\theta (m\otimes m')),$$
for any $r, r'
\in R$ and $m, m' \in M$.}
\end{definition}
Let $T:=R\ltimes _{\theta} M$.
Then a left $T$-module is identified with a pair
$(X,\alpha)$, where $X\in \Mod R$ and $\alpha \in \Hom _R(M\otimes_{R}X, X)$ such that $\alpha \circ (1_M\otimes \alpha)
=\alpha\circ(\theta \otimes 1_X)$.
Moreover, we have the following ring homomorphisms
$R\rightarrow T$ given by $r \mapsto (r, 0)$ and $T\rightarrow R$ given by $(r, m) \mapsto r$.
By \cite[Section 6.3]{KP25}, we have the following
cleft extension
of module categories
$$\xymatrix@!=8pc{ \Mod R \ar[r]|{i=_TR\otimes _R-} & \Mod T
			 \ar[r]|{e=_RT\otimes _T-} \ar@/_2pc/[l]|{q=_RR\otimes _T-} & \Mod R
			\ar@/_2pc/[l]|{l=_TT\otimes _R-} \ar@(ur,ul)_F    },  $$
where $q(X,\alpha)=\Coker \alpha$,
$i(Y)=(Y,0)$, $l(Y)=(Y\oplus M\otimes _R Y,\tiny {\left(\begin{array}{cc} 0 &0 \\ 1 & \theta \otimes 1_Y \end{array}\right)})$,
$e(X,\alpha)=X$ and $F(Y)=M\otimes _RY$.
Moreover, every cleft extension of module categories
is isomorphic to a cleft extension induced by a
$\theta$-extension, see \cite[Proposition 6.9]{KP25}.

Consider the conditions in Proposition~\ref{proposition-l-preserve-Gproj} and Proposition~\ref{proposition-q-preserve-Gproj}.
Then the cleft extension induced by $T=R\ltimes _{\theta} M$ is compatible if the following two conditions hold:

(1) for any complete $R$-projective resolution $P^\bullet$,
both the complexes $M\otimes _RP^\bullet$ and $\Hom _{R}(P^\bullet , \Add M) $ are acyclic;

(2) both $R\otimes _TQ^\bullet$ and $\Hom _{T}(Q^\bullet , \Add R) $ are acyclic for any complete $T$-projective resolution $Q^\bullet$;
or $M$ is nilpotent, $\Tor _i^R(M^j, \Add M)=0$ for any $i,j>0$,
and both $M^j\otimes _Re(\widetilde{Q}^\bullet)$ and $\Hom _{T}(\widetilde{Q}^\bullet , \Add(T\otimes _RM^j)) $ are acyclic,
for any complete $T$-projective resolution $\widetilde{Q}^\bullet$.

Applying Theorem~\ref{thm-comp-exten} and Theorem~\ref{theorem-Gpro-condition} to $\theta$-extension,
we have the following corollary, which is a generalization of \cite[Corollary 3.6]{M25})
since a trivial ring extension $R\ltimes M$ is a $\theta$-extension with $\theta=0$.
\begin{corollary}\label{cor-theta-extension}{\rm (compare \cite[Corollary 3.6]{M25})}
Let $T:=R\ltimes _{\theta} M$ be a $\theta$-extension which satisfies the above two conditions. Then
both $l=T\otimes _R-$ and $q=R\otimes _T-$ preserve Gorenstein projective modules. In particular,
if $(X,\alpha)\in \GProj T$, then $\Coker \alpha \in \GProj R$ and the complex
$$\xymatrix@!=4pc{ M\otimes _RM\otimes _RX \ar[rr]^{1_M\otimes \alpha-\theta \otimes 1_X} & & M\otimes _RX \ar[r]^{\alpha}
&X}$$ is exact; and the converse also holds if $\theta =0$.
\end{corollary}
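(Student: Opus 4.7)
The plan is to specialize Theorems~\ref{thm-comp-exten} and~\ref{theorem-Gpro-condition} to the cleft extension of module categories associated with $T=R\ltimes_\theta M$. First, the two hypotheses imposed on $R\ltimes_\theta M$ in the statement are, by the paragraph preceding the corollary, precisely the compatibility conditions of Proposition~\ref{proposition-l-preserve-Gproj} and Proposition~\ref{proposition-q-preserve-Gproj} translated through the explicit formulas for $l$, $q$, $i$, $e$ and $F$. Hence the cleft extension is compatible and Theorem~\ref{thm-comp-exten} yields directly that both $l=T\otimes_R-$ and $q=R\otimes_T-$ preserve Gorenstein projective modules.

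For the necessary condition I would apply $(1)\Rightarrow(3)$ of Theorem~\ref{theorem-Gpro-condition}, for which the central task is to identify the arrows $\alpha_X$ and $\beta_X$ on a given $(X,\alpha)$. A direct computation with the counit $\lambda_{(X,\alpha)}: le(X,\alpha)=(X\oplus M\otimes_R X,\phi_X)\to(X,\alpha)$, which is the row $(1_X,\alpha)$ and is $T$-linear thanks to the associativity $\alpha\circ(1_M\otimes\alpha)=\alpha\circ(\theta\otimes 1_X)$, identifies $G(X,\alpha)\cong (M\otimes_R X,\,\theta\otimes 1_X-1_M\otimes\alpha)$, embedded in $le(X,\alpha)$ by $z\mapsto(-\alpha(z),z)$. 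Passing to $q$-cokernels shows that $\beta_X=q(u_X)\circ q(\lambda_{G(X)})$ equals $\alpha$ up to an overall sign that is irrelevant for the exactness assertion. Iterating the same recipe on $G(X,\alpha)$ then identifies $\alpha_X=1_M\otimes\alpha-\theta\otimes 1_X$. Since $q(X,\alpha)=\Coker\alpha$, condition~(3) of Theorem~\ref{theorem-Gpro-condition} translates verbatim into the displayed assertions.

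For the converse when $\theta=0$, it remains to check that the natural transformation $\eta:F^2\to F$ of Theorem~\ref{theorem-Gpro-condition} vanishes. Unwinding the defining composition $F^2(Y)\to elF(Y)\to elel(Y)\to el(Y)\to F(Y)$ with the explicit splittings of $0\to F\to el\to \Id_{\mathcal{B}}\to 0$ and the formula for $\lambda_{lY}$ produces $\eta_Y=\theta\otimes 1_Y: M\otimes_R M\otimes_R Y\to M\otimes_R Y$, which is zero precisely when $\theta=0$; alternatively, when $\theta=0$ the ring $T=R\ltimes M$ is a trivial extension and $\Mod T\cong\mathcal{B}_F(0)$ via \cite[Theorem~2.6]{Bel00}, so $\eta=0$ holds tautologically. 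Theorem~\ref{theorem-Gpro-condition} then delivers $(3)\Rightarrow(1)$.

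The main obstacle is the explicit identification of $\alpha_X$ and of $\eta$ in terms of $\alpha$ and $\theta$; once these are pinned down, the corollary follows formally from Theorems~\ref{thm-comp-exten} and~\ref{theorem-Gpro-condition}, and the sign ambiguities coming from the chosen isomorphism $G(X,\alpha)\cong M\otimes_R X$ wash out in the exactness conclusion.
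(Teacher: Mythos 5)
Your proposal is correct and follows essentially the same route as the paper: the first claim is read off from Theorem~\ref{thm-comp-exten}, and the rest comes from Theorem~\ref{theorem-Gpro-condition} together with the explicit identifications $G(X,\alpha)=(M\otimes_R X,\ \theta\otimes 1_X-1_M\otimes\alpha)$, $u_{(X,\alpha)}$ given by $z\mapsto(-\alpha(z),z)$ and $\lambda_{(X,\alpha)}=(1,\alpha)$, which are exactly the facts the paper invokes. Your additional explicit verification that $\eta_Y=\theta\otimes 1_Y$ (hence $\eta=0$ when $\theta=0$) is a detail the paper leaves implicit, but it is consistent with its argument.
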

\begin{proof}
The first statement follows from Theorem~\ref{thm-comp-exten} immediately. For the second
one, we use Theorem~\ref{theorem-Gpro-condition} and the fact that $G(X,\alpha)=(M\otimes _RX, \theta \otimes 1_X-1_M\otimes \alpha)$,
$le(X, \alpha)=(X\oplus M\otimes _RX, \tiny {\left(\begin{array}{cc} 0 &0 \\ 1 & \theta \otimes 1_X \end{array}\right)}$),
$u_{(X, \alpha)}=\tiny {\left(\begin{array}{cc} -\alpha \\ 1  \end{array}\right)}$ and $\lambda_{(X, \alpha)}=(1, \alpha)$.
\end{proof}

Next, we will consider Morita context rings with zero homomorphisms and triangular matrix algebras,
which are two special examples of trivial ring extensions.
Let $A$ and $B$ be rings, $_AM_B$, $_BN_A$ two bimodule, and
$\Lambda _{(0,0)}=\left[\begin{array}{cc} A & _AM_B \\ _BN_A & B  \end{array}\right] $
be a Morita context ring with zero homomorphisms. Then a left $\Lambda_{(0,0)}$-module is identified with a quadruple
$(X,Y,f, g)$, where $X\in{\Mod A},Y\in{\Mod B}$, $f\in \Hom _B(N\otimes_{A}X, Y)$,
$g \in \Hom _A(M\otimes_{B}Y, X)$ such that $g\circ (1_M\otimes f)=0$ and $f\circ (1_N\otimes g)=0$.
On the other hand, $\Lambda _{(0,0)}=(A\times B)\ltimes (M\oplus N)$ is a trivial ring extension.
Therefore, the corresponding cleft extension is compatible if the following two conditions hold:

(1) for any complete $A$-projective resolution $P^\bullet$ and complete $B$-projective resolution $\widetilde{P}^\bullet$,
the complexes $N\otimes _AP^\bullet$, $M\otimes _B\widetilde{P}^\bullet$,
$\Hom _{A}(P^\bullet , \Add M) $ and $\Hom _{B}(\widetilde{P}^\bullet , \Add N) $ are acyclic;

(2) both $(A,B,0,0)\otimes _TQ^\bullet$ and $\Hom _{T}(Q^\bullet , \Add (A,B,0,0)) $ are acyclic for any complete $T$-projective resolution $Q^\bullet$.

Applying Theorem~\ref{theorem-Gpro-condition}, we establish some necessary and
sufficient conditions of infinitely generated Gorenstein-projective modules over Morita context rings,
which unify some known results such as \cite[Theorem 5.2]{M25} and \cite[Theorem 1]{Ase22}.
We mention that
the necessary and sufficient conditions were investigated in
\cite[Proposition 3.14]{GX24} for finitely generated Gorenstein projective modules over noetherian Morita context rings.

\begin{corollary}\label{cor-Morita-ring}{\rm (compare \cite[Theorem 5.2]{M25} and \cite[Proposition 3.14]{GX24})}
Let $\Lambda _{(0,0)}=\left[\begin{array}{cc} A & _AM_B \\ _BN_A & B  \end{array}\right] $
be a Morita context ring which satisfies the above two conditions. Then the following
statements are equivalent:

{\rm (1)} $(X,Y,f, g) \in \GProj \Lambda _{(0,0)}$;

{\rm (2)} $\Coker f \in \GProj B$, $\Coker g \in \GProj A$, and there are two induced isomorphisms
$M\otimes _B \Coker f\cong \Im g$ and $N\otimes _A \Coker g\cong \Im f$;

{\rm (3)} $\Coker f \in \GProj B$, $\Coker g \in \GProj A$,
and the following two sequences $$\xymatrix{M\otimes_{B}N\otimes _AX
\ar[r]^(.6){1_M\otimes f} & M\otimes_{B}Y \ar[r]^(.6){g} & X } $$
and $$\xymatrix{N\otimes_{A}M\otimes _BY
\ar[r]^(.6){1_N\otimes g} & N\otimes_{A}X \ar[r]^(.6){f} & Y }$$ are exact.
\end{corollary}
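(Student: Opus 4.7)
The plan is to view $\Lambda_{(0,0)} = (A\times B) \ltimes (M\oplus N)$ as a trivial ring extension, i.e.\ a $\theta$-extension with $\theta = 0$. Consequently the natural transformation $\eta : F^2 \to F$ appearing in Theorem~\ref{theorem-Gpro-condition} vanishes, and all three conditions of that theorem become equivalent. I would then set up the dictionary for this specific cleft extension: under the identifications $\Mod(A\times B) = \Mod A \times \Mod B$ and $\GProj(A\times B) = \GProj A \times \GProj B$, the relevant functors are $e(X,Y,f,g) = (X,Y)$, $q(X,Y,f,g) = (\Coker g,\Coker f)$, and $F(X',Y') = (M\otimes_B Y',\, N\otimes_A X')$. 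Hence the abstract sequence $F^2 e \to F e \to e$ evaluated at $(X,Y,f,g)$ splits, coordinatewise, precisely into the two sequences written in condition (3) of the corollary, and the abstract hypothesis $q(X,Y,f,g)\in\GProj(A\times B)$ becomes ``$\Coker g \in \GProj A$ and $\Coker f \in \GProj B$''. This delivers (1)$\Leftrightarrow$(3) as a direct translation of Theorem~\ref{theorem-Gpro-condition}.

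For the equivalence (2)$\Leftrightarrow$(3) the key tool will be the Morita-context relations $g\circ (1_M\otimes f) = 0$ and $f\circ (1_N\otimes g) = 0$, which produce canonical factorizations. Applying the right-exact functor $M\otimes_B -$ to $N\otimes_A X \xrightarrow{f} Y \to \Coker f \to 0$ identifies $\Coker(1_M\otimes f)$ with $M\otimes_B \Coker f$, and $g$ descends to a morphism $M\otimes_B \Coker f \to X$ whose image is exactly $\Im g$. Exactness of $M\otimes_B N\otimes_A X \to M\otimes_B Y \to X$ at the middle term is therefore equivalent to this induced map being injective, i.e.\ to the isomorphism $M\otimes_B\Coker f \cong \Im g$; the symmetric diagram chase on the other side yields $N\otimes_A \Coker g \cong \Im f$.

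The main technical point, rather than a genuine obstacle, will be to verify that the two explicit compatibility conditions stated just before the corollary are the faithful translations of the abstract hypotheses of Propositions~\ref{proposition-l-preserve-Gproj} and~\ref{proposition-q-preserve-Gproj}(i) in this Morita-context setting. This uses that a complete $(A\times B)$-projective resolution is just a pair of complete projective resolutions over $A$ and $B$, that $F(A\times B) = (M,N)$ so $\Add F(A\times B)$ decomposes coordinatewise into $\Add M$ and $\Add N$, and that $F$ itself splits as $N\otimes_A -$ on the first coordinate and $M\otimes_B -$ on the second. Once this bookkeeping is in place, the corollary follows immediately from Theorem~\ref{theorem-Gpro-condition}.
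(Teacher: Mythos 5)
Your proposal is correct and follows essentially the same route as the paper: identify $\Lambda_{(0,0)}$ as the trivial extension $(A\times B)\ltimes(M\oplus N)$ so that $\theta=0$ forces $\eta=0$ and all three conditions of Theorem~\ref{theorem-Gpro-condition} coincide, then translate $q$, $e$, $F$ and the sequence $F^2e\to Fe\to e$ coordinatewise into the data of the corollary. The paper phrases the bridge between conditions (2) and (3) via the observation that $q(u_{(X,Y,f,g)})$ is a monomorphism exactly when the two induced maps $M\otimes_B\Coker f\to\Im g$ and $N\otimes_A\Coker g\to\Im f$ are isomorphisms, which is the same factorization argument you describe.
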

\begin{proof}
For any $(X,Y,f, g) \in \Mod \Lambda _{(0,0)}$, it follows that $$q(X,Y,f, g)=(\Coker g, \Coker f),$$
and the sequence in Theorem~\ref{theorem-Gpro-condition} (3) is of the form
$$\xymatrix@!=3pc{(M\otimes _BN\otimes_{A}X, N\otimes _AM\otimes_{B}Y)
\ar[rrr]^(.6){(1_M\otimes f, 1_N\otimes g)}  &  & &(M\otimes_{B}Y, N\otimes_{A}X) \ar[rr]^(.6){(g,f)} \ar @{->>}[rd] & &(X,Y)  \\
 &  & & &(M\otimes _B \Coker f, N\otimes _A \Coker g)\ar [ur]_{q(u_{(X,Y,f, g)})} }.$$
Note that $q(u_{(X,Y,f, g)})$ is a monomorphism if and only there are two isomorphisms $M\otimes _B \Coker f\cong \Im g$ and $N\otimes _A \Coker g\cong \Im f$
induced by $q(u_{(X,Y,f, g)})$.
Therefore, the statement follows from
Theorem~\ref{theorem-Gpro-condition}.
\end{proof}

Next, we will consider infinitely generated Gorenstein-projective modules over triangular matrix rings,
where the finitely generated version
was investigated in
\cite[Theorem 1.4]{Z13}.
\begin{corollary}\label{cor-tri-alg}{\rm (compare \cite[Theorem 1.4]{Z13})}
Let $\Lambda =
\left[\begin{array}{cc} A & _AM_B \\ 0 & B  \end{array}\right] $
be a triangular matrix ring. Assume that $M\otimes _BQ^\bullet$ is exact
for any exact sequence of infinitely generated projective $B$-modules, and
$\Hom _{A}(P^\bullet , \Add M) $ is exact
for any complete $A$-projective resolution $P^\bullet$ in $\Mod A$.
Then a $\Lambda$-module $(X,Y,g)\in\GProj \Lambda $ if and only if
$\Coker g \in \GProj A$, $Y\in \GProj B$ and $g$ is a monomorphism.
\end{corollary}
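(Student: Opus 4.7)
The plan is to deduce this corollary from Corollary~\ref{cor-Morita-ring} by viewing $\Lambda$ as the Morita context ring with $N=0$. Under this identification, a $\Lambda$-module $(X,Y,g)$ corresponds to the Morita quadruple $(X,Y,0,g)$, with $f=0$ the unique map from $0=N\otimes_A X$ to $Y$, and the associativity relations $g\circ(1_M\otimes f)=0$ and $f\circ(1_N\otimes g)=0$ hold vacuously.

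First I would verify that the two compatibility hypotheses preceding Corollary~\ref{cor-Morita-ring} are satisfied. In condition~(1), since $N=0$ the complexes $N\otimes_A P^\bullet$ and $\Hom_B(\widetilde{P}^\bullet,\Add N)$ vanish, and the remaining acyclicities of $M\otimes_B\widetilde{P}^\bullet$ and $\Hom_A(P^\bullet,\Add M)$ are precisely the two assumptions made on $M$. For condition~(2), rather than checking it head-on I would invoke Proposition~\ref{proposition-q-preserve-Gproj}(ii): since $F(X,Y)=(M\otimes_B Y,0)$, one has $F^2=0$, so $F$ is nilpotent; and $\mathbb{L}_iF^j(F(P))=0$ for $i,j>0$ because projective resolutions of $F(P)=(M\otimes_B P_B,0)$ can be chosen concentrated on the first coordinate, which $F$ then annihilates. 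The acyclicity of $F^j(eQ^\bullet)$ is trivial for $j\geq 2$ and for $j=1$ reduces to that of $M\otimes_B(eQ^\bullet)_B$; and the acyclicity of $\Hom_\Lambda(Q^\bullet,lF^j(P))$ for $j=1$ follows, via $lF(P)\cong i(M\otimes_B P_B,0)$ together with the adjunction $(q,i)$, from the acyclicity of $\Hom_A((qQ^\bullet)_A,\,M\otimes_B P_B)$. Following the analysis of Proposition~\ref{tri-matri-comp}, both $(eQ^\bullet)_B$ and $(qQ^\bullet)_A$ are realised as complete projective resolutions over $B$ and over $A$ respectively, and the two hypotheses on $M$, together with $M\otimes_B P_B\in\Add M$, deliver the required acyclicities.

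With compatibility in place, I would apply Corollary~\ref{cor-Morita-ring} and specialise to $N=0$, $f=0$. One has $\Coker f=Y$ and $\Im f=0$; the induced isomorphism $N\otimes_A\Coker g\cong\Im f$ reduces to $0\cong 0$ and is automatic, while the induced isomorphism $M\otimes_B\Coker f\cong\Im g$ becomes the assertion that the map $g\colon M\otimes_B Y\to X$ factors as an isomorphism onto its image, equivalently that $g$ is a monomorphism. The first exact sequence in condition~(3) of Corollary~\ref{cor-Morita-ring} collapses to $0\to M\otimes_B Y\xrightarrow{g} X$ and encodes the same monomorphicity, while the second collapses trivially. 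Together with $Y=\Coker f\in\GProj B$ and $\Coker g\in\GProj A$, this yields the stated equivalence. The main obstacle is the compatibility verification, specifically extracting complete projective $A$- and $B$-resolutions from an arbitrary complete $\Lambda$-projective resolution $Q^\bullet$ in the infinitely generated setting, so that the hypotheses on $M$ can be invoked; this is the crux of the reduction and mirrors the finitely generated analysis of Proposition~\ref{tri-matri-comp}.
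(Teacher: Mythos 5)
Your proposal is correct and follows essentially the same route as the paper: the paper likewise deduces compatibility of the infinitely generated cleft extension from the argument of Proposition~\ref{tri-matri-comp}, then specialises Corollary~\ref{cor-Morita-ring} to the case $N=0$ and observes that the exactness conditions there collapse to $g$ being a monomorphism. Your write-up is in fact somewhat more explicit than the paper's about which compatibility conditions are being checked and how the hypotheses on $M$ enter, but the underlying argument is identical.
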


\begin{proof}
By the proof of Proposition~\ref{tri-matri-comp}, we deduce that the corresponding
cleft extension of infinitely generated modules is compatible,
and then we have the equivalent descriptions of the infinitely generated Gorenstein-projective module
as in Corollary~\ref{cor-Morita-ring} where $N=0$.
Moreover, the sequences in Corollary~\ref{cor-Morita-ring} (3)
are exact if and only if $g$ is a monomorphism.
Therefore, this statement follows from Corollary~\ref{cor-Morita-ring}.
\end{proof}

Now we will explain some related results on tensor rings. Let $R$ be a ring and $M$ a $R$-$R$-bimodule.
Recall that the {\it tensor ring} $T_R(M) =\oplus _{i=0}^{\infty}M^i$,
where $M^0 = R$ and $M^{i+1} = M \otimes _R M^{i}$ for $i \geq 0$. It follows from
\cite[Example 6.8]{KP25} that $T_R(M)$ is a special $\theta$-extension with $\theta\neq 0$.
Moreover, a $T_R(M)$-module is identified with a pair
$(X,\alpha)$, where $X\in \Mod R$ and $\alpha \in \Hom _R(M\otimes_{R}X, X)$. Consider
the cleft extension $(\Mod R, \Mod T_R(M), i,e,l)$. We can check that $qG(X,\alpha)=M\otimes _RX$
and the morphism $q(u_{(X,\alpha)})$ in Theorem~\ref{theorem-Gpro-condition} is just $\alpha$.
Therefore, if $(X,\alpha) \in \GProj (T_R(M))$ then $\Coker \alpha \in \GProj R$ and $\alpha$ is monomorphic.
We mention that
the converse was proved true
in \cite[Theorem 3.9]{CL20} and \cite[Theorem A]{DLST25}.
However, we don't know whether the necessary conditions in Theorem~\ref{theorem-Gpro-condition}
are also sufficient when $\theta\neq0$, because the proof of \cite[Theorem 3.9]{CL20} and \cite[Theorem A]{DLST25}
rely on the fact that $qG(X,\alpha)=M\otimes _RX$, which is not true for general $\theta$-extensions
or cleft extensions.

\noindent {\footnotesize {\bf ACKNOWLEDGMENT.}
This work is supported by
the National Natural Science Foundation of China (12061060),
the project of Young and Middle-aged Academic and Technological leader of Yunnan
(202305AC160005) and the Basic Research Program of Yunnan Province (202301AT070070).}

\end{document}